\documentclass{amsart}

\usepackage[utf8]{inputenc}
\usepackage[T1]{fontenc}

\usepackage{mathpazo}
\usepackage{microtype}

\usepackage{amsmath, amsthm, amssymb}
\usepackage[all]{xy}

\newtheorem{theorem}{Theorem}[section]
\newtheorem{proposition}[theorem]{Proposition}
\newtheorem*{proposition*}{Proposition}
\theoremstyle{remark}
\newtheorem{claim}{Claim}
\newtheorem*{claim*}{Claim}

\numberwithin{equation}{section}

\newcommand{\f}{\mathfrak}
\newcommand{\K}{\mathbb{K}}
\newcommand{\F}{\mathbb{F}}
\newcommand{\R}{\mathbb{R}}
\newcommand{\C}{\mathbb{C}}
\newcommand{\im}{\mathbf{i}}

\newcommand{\set}[2]{\left\{ #1 \mid \allowbreak #2 \right\}}

\DeclareMathOperator{\Span}{Span}
\DeclareMathOperator{\Hom}{Hom}
\DeclareMathOperator{\Der}{Der}
\DeclareMathOperator{\ad}{ad}
\DeclareMathOperator{\dsum}{\dot{+}}

\newcommand{\rect}[3]{
  \multiput(#1)(#2,0){2}{\line(0,1){#3}}
  \multiput(#1)(0,#3){2}{\line(1,0){#2}}}

\begin{document}

\title{On derivations of parabolic {L}ie algebras}

\author{Daniel Brice}
\address{Department of Mathematics,
  California State University, Bakersfield, California 93311, USA}
\email{daniel.brice@csub.edu}

\date{\today}

\begin{abstract}
  Let $\mathfrak{g}$ be a reductive Lie algebra over an
  algebraically closed, characteristic zero field or over $\mathbb{R}$.
  Let $\mathfrak{q}$ be a parabolic subalgebra of $\mathfrak{g}$.
  We characterize the derivations of $\mathfrak{q}$ by decomposing the
  derivation algebra as the direct sum of two ideals: one of which being
  the image of the adjoint representation and the other consisting of
  all linear transformations on $\mathfrak{q}$ that map into the center
  of $\mathfrak{q}$ and map the derived algebra of $\mathfrak{q}$ to
  $0$.
\end{abstract}

\maketitle

\section{Introduction}\label{sec:introduction}

The study of derivations belongs to the classical theory of Lie
algebras.
We begin with the well-known result that if $\f g$ is a semisimple Lie
algebra over a field of characteristic not equal to two, then $\f g$
admits only inner derivations
\cite{humphreys1972introduction, strade1988modular},
in which case $\f g \cong \Der \f g$.
By 1972, Leger and Luks extended this result to the Borel algebras of
$\f g$ \cite{leger1972cohomology}.
More generally, their result applies to the class of Lie algebras $\f b$
that can be expressed as the semidirect product
$\f b = \f a \rtimes \f b'$ where the subalgebra $\f b'$ is nilpotent
and the ideal $\f a$ is abelian and acts diagonally on $\f b'$
\cite{leger1972cohomology}.
This wider class of Lie algebras includes Borel subalgebras of a
semisimple $\f g$ but does not include parabolic subalgebras.
Working independently, Tolpygo arrived at the same result for the
parabolic subalgebras $\f q$ of a semisimple algebra $\f g$, but only in
the special case that the scalar field is the complex numbers $\C$
\cite{tolpygo1972cohomologies}.

The recent direction that work on derivations has taken has been to
relax the definition of Lie algebra to include consideration of Lie
algebras that draw scalars from commutative rings rather than from
fields, characterizing derivations of specific classes of such Lie
algebras
\cite{ou2007derivations, wang2008derivations, wang2006derivations}.
Zhang in 2008 takes a different approach, defining a new class of
solvable Lie algebras over $\C$ and characterizing their derivation
algebras \cite{zhang2008class}.

Other work has been in the direction of considering certain maps that
are similar to but potentially fail to be derivations
\cite{chen2011nonlinear, chen2012nonlinear, wang2010product}.
Wang et al. recently defined a \emph{product zero derivation} of a Lie
algebra $\f g$ as a linear map $f: \f g \to \f g$ satisfying
$[f(x),y]+[x,f(y)]=0$ whenever $[x,y]=0$ \cite{wang2010product}.
The authors go on to characterize the product zero derivations of
parabolic subalgebras $\f q$ of simple Lie algebras over an
algebraically closed, characteristic zero field, ultimately showing all
product zero derivations of $\f q$ to be sums of inner derivations and
scalar multiplication maps \cite{wang2010product}.
In papers appearing in 2011 and 2012, Chen et al. consider nonlinear
maps satisfying derivability and nonlinear Lie triple derivations
\cite{chen2011nonlinear, chen2012nonlinear}.
The authors characterize all such maps on parabolic subalgebras of a
semisimple Lie algebra over $\C$ as the sums of inner derivations and
certain maps called quasi-derivations that may fail to be linear
\cite{chen2011nonlinear, chen2012nonlinear}.

The purpose of this paper is to extend the classical results of Leger
and Luks \cite{leger1972cohomology} and Tolpygo
\cite{tolpygo1972cohomologies} to the case where $\f q$ is a parabolic
subalgebra of a reductive algebra $\f g$.
We prove the following theorem.

\begin{proposition*}
  Let $\f q$ be a parabolic subalgebra of a reductive Lie algebra
  $\f g$ over an algebraically closed, characteristic zero field $\K$
  or over $\R$.
  Let $\f L$ be the set of all linear transformations mapping $\f q$
  into its center $\f q_Z$ and sending $[\f q, \f q]$ to $0$.
  Then $\f L$ is an ideal of $\Der \f q$ and $\Der \f q$ decomposes as
  the direct sum of ideals
  \[
    \Der \f q = \f L \oplus \ad \f q\text{.}
  \]
\end{proposition*}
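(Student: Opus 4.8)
\section*{Proof proposal}

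The plan is to dispatch the easy assertions first and then reduce a general derivation to a weight-homogeneous problem. That every $f\in\f L$ is a derivation is immediate: for $x,y\in\f q$ the left side $f([x,y])$ vanishes since $[x,y]\in[\f q,\f q]$, and the right side $[f(x),y]+[x,f(y)]$ vanishes since $f(x),f(y)\in\f q_Z$ are central. Using the standard facts that any $D\in\Der\f q$ preserves both $\f q_Z$ and $[\f q,\f q]$, one checks that for $f\in\f L$ the commutator $Df-fD$ again maps into $\f q_Z$ and annihilates $[\f q,\f q]$, so $\f L$ is an ideal; $\ad\f q$ is an ideal because $[D,\ad x]=\ad(Dx)$. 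Writing $\f g=\f z(\f g)\oplus\f g_{ss}$ with $\f g_{ss}$ semisimple and $\f q=\f z(\f g)\oplus\f q_{ss}$ for a parabolic $\f q_{ss}\subseteq\f g_{ss}$, I would record the structural identities $\f q_Z=\f z(\f g)$ and, for the Levi decomposition $\f q=\f l\oplus\f n$, $[\f q,\f q]=[\f l,\f l]\oplus\f n$; both follow from a direct root-space computation. Triviality of $\f L\cap\ad\f q$ is then quick: if $\ad x\in\f L$, write $x=z+y$ with $z\in\f z(\f g)$, so $[x,\f q]=[y,\f q_{ss}]\subseteq\f q_{ss}$, which meets $\f z(\f g)=\f q_Z$ in $0$; hence $[y,\f q_{ss}]=0$ and $\ad x=0$.

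For the surjectivity $\Der\f q=\f L+\ad\f q$, fix a Cartan subalgebra $\f h=\f z(\f g)\oplus\f h_{ss}\subseteq\f q$ and the root-space decomposition $\f q=\f h\oplus\bigoplus_{\lambda\in\Phi_{\f q}}\f g_\lambda$, where $\Phi_{\f q}\supseteq\Phi^+$. Since $\ad\f h$ acts semisimply on $\f q$, it acts semisimply on $\operatorname{End}\f q$ and preserves $\Der\f q$, so any $D\in\Der\f q$ splits as $D=\sum_\mu D_\mu$ into weight components (each a derivation) with $D_\mu(\f q_\lambda)\subseteq\f q_{\lambda+\mu}$. I would show each $D_\mu$ with $\mu\neq0$ is inner: either $\mu\notin\Phi_{\f q}$, so $D_\mu$ already annihilates $\f h$, or the derivation identity on pairs in $\f h$ forces the $\f g_\mu$-valued functional $h\mapsto D_\mu(h)$ to be proportional to $\mu$, so a suitable $y_\mu\in\f g_\mu$ makes $D_\mu-\ad y_\mu$ vanish on $\f h$. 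A short weight argument (pick $h$ with $\mu(h)\neq0$ and $\lambda(h)\neq0$) then shows any weight-$\mu$ derivation killing $\f h$ is zero. Hence $D\equiv D_0\pmod{\ad\f q}$.

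It remains to treat the weight-zero derivation $D_0$, which preserves $\f h$ and scales each root space, $D_0(e_\lambda)=c_\lambda e_\lambda$. The identity on $\f h\times\f g_\lambda$ gives $\lambda(D_0 h)=0$ for all $\lambda\in\Phi_{\f q}$; as $\Phi^+\subseteq\Phi_{\f q}$ spans $\f h_{ss}^*$, this yields $D_0(\f h)\subseteq\f z(\f g)=\f q_Z$. The identity on $\f g_\lambda\times\f g_\mu$ gives additivity $c_{\lambda+\mu}=c_\lambda+c_\mu$ whenever $[\f g_\lambda,\f g_\mu]\neq0$, and on $\f g_\lambda\times\f g_{-\lambda}$ (for $\lambda\in\langle S\rangle$) it gives $D_0(h_\lambda)=0$ and $c_{-\lambda}=-c_\lambda$. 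Defining $h_0\in\f h_{ss}$ by $\alpha_i(h_0)=c_{\alpha_i}$ on the simple roots and inducting on height (each non-simple positive root is $\mu+\alpha_i$ with $\mu$ a root and the bracket nonzero in characteristic zero) gives $c_\lambda=\lambda(h_0)$ for all $\lambda\in\Phi_{\f q}$. Then $f:=D_0-\ad h_0$ annihilates every root space, sends $\f h$ into $\f q_Z$, and kills $\f h\cap[\f l,\f l]=\Span\{h_\lambda\}$; since $[\f q,\f q]=[\f l,\f l]\oplus\f n$ is spanned by the root spaces it contains together with $\f h\cap[\f l,\f l]$, this says precisely $f\in\f L$. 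Assembling the components, $D=\ad\bigl(h_0+\sum_\mu y_\mu\bigr)+f\in\ad\f q+\f L$, and with the trivial intersection the sum is direct.

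The main obstacle is the \emph{weight-zero case}: everything hinges on upgrading the mere additivity of the diagonal scalars $c_\lambda$ to their coming from a single toral element $h_0$, and on showing the leftover $f$ lands in $\f q_Z$ while killing $\f h\cap[\f l,\f l]$ --- this is exactly where the shapes of $\f q_Z$ and $[\f q,\f q]$ and the spanning of $\f h_{ss}^*$ by $\Phi_{\f q}$ are used simultaneously. By comparison the nonzero-weight reduction is routine. Finally, over $\R$ one passes to the complexification, using $\Der(\f q)\otimes\C\cong\Der(\f q\otimes\C)$ together with the facts that $\ad\f q$ and $\f L$ base-change to their complex analogues, so that the decomposition obtained over $\C$ descends to $\R$.
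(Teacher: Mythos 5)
Your proposal is correct, but it takes a genuinely different route from the paper's in both halves. In the algebraically closed case, the paper never decomposes $\Der \f q$ into $\ad \f h$-weight components: it works with one arbitrary derivation $D$, writes $D(h)=z+h'+\sum_\gamma a_\gamma(h)x_\gamma$, uses $D([h,k])=0$ on $\f h\times\f h$ to show $a_\gamma=d_\gamma\,\gamma$, and corrects by the single element $x=-\sum_\gamma d_\gamma x_\gamma$ (your $\sum_\mu y_\mu$ packaged into one inner derivation), which already forces $D-\ad x$ to stabilize every root space and map $\f h$ into $\f g_Z\dsum\f h$; your separate lemmas that each nonzero-weight component is individually inner and that a nonzero-weight derivation killing $\f h$ vanishes are thereby bypassed. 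The zero-weight analysis is essentially the same in both treatments: the paper's functional $\tilde c$ with $\tilde c(\alpha)=c_\alpha$ on $\Delta$, shown to satisfy $c_\gamma=\tilde c(\gamma)$ by expanding an iterated bracket along $\gamma=\alpha_1+\cdots+\alpha_k$ with partial sums in $\Phi'$, is your $h_0$ and height induction in different clothing. Your organization buys modularity and conceptual clarity (complete reducibility of the $\ad\f h$-action on $\f{gl}(\f q)$ plus invariance of $\Der\f q$ do the bookkeeping), and it isolates the real content in the weight-zero case; the paper's buys explicitness and avoids invoking any weight theory on $\f{gl}(\f q)$. The divergence is sharpest in the real case: the paper does not use $\Der(\hat{\f q})\cong\widehat{\Der\f q}$ and descent, but instead extends one real derivation $D$ to $\hat D$, applies the complex theorem to write $\hat D=L+\ad(x+\im y)$, and chases elements to show $L$ maps $\f q$ into $\f g_Z$ and $y$ is central, whence $\ad(x+\im y)=\ad x$ and $D=L|_{\f q}+\ad x$. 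Your descent argument is cleaner but silently relies on base-change identities for $\Der$, $\f q_Z$, $[\f q,\f q]$, $\f L$, and $\ad$, each of which needs the sort of (easy) verification the paper performs inline. One small point to tighten: when concluding $f=D_0-\ad h_0\in\f L$, you should also record $f(\f g_Z)\subseteq\f g_Z$, which follows from the center-stabilization fact you cited, since membership in $\f L$ constrains $f$ on all of $\f q$ and not only on $\f h$ and the root spaces.
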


The proof of the algebraically closed case is constructive: given a
derivation $D$ on $\f q$, we explicitly construct a linear map $L$ and
an element $x \in \f q$ such that $D = L + \ad x$, after which we prove
that our construction satisfies the stated properties.
The proof the real case appeals to the complex case as applied to
$\hat{\f q}$, the complexification of the real parabolic subalgebra
$\f q$.

That the method of proof of the real case relies on the complex case is
due to the following consideration.
A Lie algebra over an algebraically closed field support a more regular
structural decomposition then a real Lie algebra affords.
In particular, Langland's decomposition---an important tool in the proof
of the real case---is unnecessary for the algebraically closed case.

The method of proof for the algebraically closed relies on utilization
of the root system $\Phi$ of $\f g$.
In order to motivate the methods employed, we offer the following
example.
The reader is encouraged to keep this example in mind throughout the
sequel.

We consider the parabolic subalgebra $\f q$ of $\f g = \f{gl}_6(\C)$
consisting of block upper triangular matrices in block sizes 3, 2, 1
(see figure \ref{fig:parabolic321}).
We write $\f{gl}_6(\C) = \f g_Z \oplus \f g_S$, where the center
$\f g_Z = \C I$ and maximal semisimple ideal $\f g_S = \f{sl}_6(\C)$.
We decompose $\f q$ similarly: $\f q = \f g_Z \oplus \f q_S$,
where $\f q_S = \f q \cap \f g_S$.

\begin{figure}[ht]
  \begin{center}
    \setlength{\unitlength}{0.75cm}
    \begin{picture}(12,6)
      \put(7,4){$\circ$ coroot contained in $\f t$}
      \put(7,2){$\bullet$ coroot contained in $\f c$}
      \put(.88,4.88){$\circ$}
      \put(1.88,3.88){$\circ$}
      \put(2.88,2.88){$\bullet$}
      \put(3.88,1.88){$\circ$}
      \put(4.88,0.88){$\bullet$}
      \put(3,6){\line(1,0){3}}
      \put(6,6){\line(0,-1){5}}
      \rect{0,3}{3}{3}
      \rect{3,1}{2}{2}
      \rect{5,0}{1}{1}
    \end{picture}
    \caption{Decomposition of $\f q_S$}
    \label{fig:parabolic321}
  \end{center}
\end{figure}

$\f g_S$ has root space decomposition
\[
  \f g_S = \f h \dsum \sum_{i \neq j}{\C e_{i,j}}
\]
where $\f h$ consists of traceless diagonal $6 \times 6$ matrices.
It is well known that the coroots $h_i = e_{ii} - e_{i+1,i+1}$ form a
basis of $\f h$.
We further decompose $\f h$ into $\f t \dsum \f c$,
where $\f t = \Span \{ h_1, h_2, h_4 \}$
and $\f c = \Span \{ h_3, h_5 \}$
(see figure \ref{fig:parabolic321}).
It follows that $\f t = \f h \cap [\f q, \f q]$ and that $\f q$ has the
vector space direct sum decomposition
\[
  \f q = \f g_Z \dsum \f c \dsum [\f q, \f q]\text{.}
\]
In light of this decomposition (and noting that $\f g_Z = \f q_Z$),
a linear transformation that sends $\f q$ to $\f q_Z$ and sends
$[\f q, \f q]$ to $0$ has the block matrix form illustrated by figure
\ref{fig:l_blockform}.

\begin{figure}[ht]
  \[
    \bordermatrix{
      &\hfill\f g_Z\hfill&\hfill\f c\hfill&\hfill[\f q, \f q]\hfill\cr
      \hfill\f g_Z\hfill&\ast&\ast&0\cr
      \hfill\f c\hfill&0&0&0\cr
      \hfill[\f q, \f q]\hfill&0&0&0
    }
  \]
  \caption{Block matrix form of derivations in $\f L$}
  \label{fig:l_blockform}
\end{figure}

The claim of the theorem---that $\Der \f q = \f L \oplus \ad \f q $---
may then be explicitly verified via computation in this example case.
The proofs of the theorem in general will rely on carrying out the same
decomposition of $\f q$ and the accompanying computations in abstract.

We give a brief outline of this paper.
Section \ref{sec:preliminaries} develops the necessary tools necessary
for the results in the sequel.
Section \ref{sec:complex_case} treats the case where $\f g$ is a Lie
algebra over an algebraically closed, characteristic zero field and
section \ref{sec:real_case} treats the case where $\f g$ is real.
Section \ref{sec:corollaries} contains several corollaries and a short
discussion of possible directions in which to expand upon the present
results.

Before we begin, we shall make note of some conventions of terminology
and notation.
If $\f g$ is a Lie algebra, we will denote its center by $\f g_Z$.
If $\f g$ is reductive, we denote its unique maximal semisimple ideal
by $\f g_S$.
If $\f g_1$ and $\f g_2$ are subspaces of $\f g$, intersect trivially,
and together span $\f g$, we write $\f g = \f g_1 \dsum \f g_2$.
If $\f a$ is a subalgebra (denoted $\f a \leq \f g$) and $\f b$ is an
ideal (denoted $\f b \unlhd \f g$), we will write
$\f g = \f a \ltimes \f b$ or $\f g = \f b \rtimes \f a$
interchangeably.
The notation $\f g = \f a \oplus \f b$ is reserved for the special case
where both $\f a$ and $\f b$ are ideals of $\f g$.

\section{Preliminaries}\label{sec:preliminaries}

This section develops the basic facts of the classical theory of Lie
algebras which are required for an understanding of the subsequent
discussion.
Where uncredited, the propositions in this section are elementary,
and as such are assumed to be well-known;
however, proofs are included since specific references are not
mentioned.

\begin{theorem}[Ado's Theorem]\label{thm:ados_thm}
  Let $\F$ be a characteristic zero field.
  Let $\f g$ be a (finite-dimensional) Lie algebra over $\F$.
  Then, $\f g$ is isomorphic to a space of matrices with entries in $\F$
  and bracket $[M,N] = MN-NM$ \cite[Ch. I, \S 7.3]{bourbaki1975lie}.
\end{theorem}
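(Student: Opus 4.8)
The obstruction to realizing $\f g$ as a matrix algebra by the most naive means is its center $\f z$: the adjoint representation $\ad \colon \f g \to \f{gl}(\f g)$ is a homomorphism into matrices, but its kernel is exactly $\f z$, so it fails to be faithful precisely on the center. My plan is therefore to manufacture one auxiliary finite-dimensional representation $\rho$ that is faithful on $\f z$, and then to take the direct sum $\rho \oplus \ad$. Its kernel is $\ker\rho \cap \ker\ad = \ker\rho \cap \f z$, so faithfulness of $\rho$ on $\f z$ already forces $\rho \oplus \ad$ to be faithful, exhibiting $\f g$ as a space of matrices over $\F$ as desired. Since $\f z$ is an abelian, hence nilpotent, ideal it is contained in the nilradical $\f n$ (the largest nilpotent ideal), and so it suffices to produce a finite-dimensional $\rho$ that is faithful on all of $\f n$; I will in fact arrange that $\rho$ restricted to $\f n$ acts by nilpotent operators.

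The first genuine step is the nilpotent case, due to Birkhoff. Let $\f n$ be nilpotent of class $c$, let $U(\f n)$ be its universal enveloping algebra, and let $\f m = \ker\bigl(U(\f n) \to \F\bigr)$ be the augmentation ideal. I would have $\f n$ act on the quotient $V = U(\f n)/\f m^{c+1}$ by left multiplication. By the Poincar\'e--Birkhoff--Witt theorem this quotient is finite-dimensional, and the filtration by powers of $\f m$ induces on $\f n$ its lower central series, so that $\f n \cap \f m^{c+1} = 0$. Faithfulness is then immediate, since left multiplication by $x$ sends $1$ to $x$, which is nonzero in $V$ for $x \neq 0$; and each such operator raises $\f m$-degree and is therefore nilpotent. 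This yields a faithful representation $\sigma$ of $\f n$ by nilpotent endomorphisms.

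The heart of the argument, and the step I expect to be the main obstacle, is to extend $\sigma$ from the ideal $\f n$ to all of $\f g$ without leaving the finite-dimensional world and without spoiling nilpotency on $\f n$. The mechanism is that $\f g$ acts on $\f n$ by $\ad$, these derivations lift canonically to derivations of $U(\f n)$, and each power $\f m^{k}$ is characteristic, hence preserved; thus $\f g$ acts on the finite-dimensional space $V$ by derivations of the algebra. The delicate point is that the left-multiplication action of $\f n$ and the derivation action of $\f g$ do not simply glue into a Lie homomorphism on the nose, because the derivation extending $\ad_{n}$ for $n \in \f n$ equals $L_n - R_n$ rather than $L_n$; the extra right-multiplication term must be absorbed by a careful choice of module. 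It is precisely here that characteristic zero is indispensable: I would invoke the Levi decomposition $\f g = \f s \ltimes \f r$ and Weyl's complete reducibility for the semisimple factor $\f s$ to split off the reductive directions, confine the nilpotent behaviour to $\f n$, and thereby produce a finite-dimensional representation $\rho$ of $\f g$ whose restriction to $\f n$ remains faithful and by nilpotent operators.

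Granting such a $\rho$, the direct sum $\rho \oplus \ad$ is a faithful finite-dimensional representation of $\f g$, and via a choice of basis it identifies $\f g$ with a space of matrices over $\F$ under the commutator bracket, as claimed. If one prefers to run the structural arguments over an algebraically closed field, one may instead first prove the statement over the algebraic closure $\bar\F$, observe that the finitely many matrix entries of the constructed representation lie in a finite extension $\F'/\F$, and then apply restriction of scalars along $\F'/\F$ to descend to a faithful matrix representation of dimension $n[\F':\F]$ over $\F$ itself. Either way, the crux is the extension lemma of the third paragraph: transporting a representation of the nilradical to the entire algebra while preserving both finite-dimensionality and the nilpotency of the nilradical's action.
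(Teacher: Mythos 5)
The paper itself offers no proof of this statement: Ado's Theorem is quoted as a known classical result with a citation to Bourbaki, Ch.~I, \S 7.3. So your proposal can only be judged on its own terms, and on those terms its architecture is the standard one (Iwasawa/Jacobson/Bourbaki): reduce faithfulness to the center via $\ker(\rho \oplus \ad) = \ker\rho \cap \f z$, note $\f z$ lies in the nilradical $\f n$, construct Birkhoff's faithful nilpotent representation of $\f n$ on $U(\f n)/\f m^{c+1}$, and then extend to $\f g$. Your first, second, and fourth paragraphs are essentially sound, with the caveat that $\f n \cap \f m^{c+1} = 0$ is itself a nontrivial fact (it needs the PBW filtration argument, reducing to the graded Lie algebra associated to the lower central series), acceptable to cite in a sketch.

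The genuine gap is the third paragraph, which you correctly identify as the heart of the theorem and then do not prove. Invoking the Levi decomposition $\f g = \f s \ltimes \f r$ and Weyl's complete reducibility does not produce the extension: those tools address only the semisimple factor $\f s$ (on which $\ad$ is already faithful), whereas the hard directions are the solvable ones in $\f r \setminus \f n$, where elements act neither by left multiplication on $U(\f n)$ (they do not lie in $U(\f n)$) nor semisimply, and Weyl's theorem is silent. The standard proofs bridge exactly this by an induction along a chain of ideals $\f n = \f h_0 \lhd \f h_1 \lhd \cdots \lhd \f r \lhd \f g$, each of codimension one in the next through the radical, together with a Zassenhaus-type extension lemma: a finite-dimensional representation of an ideal $\f h$, nilpotent on $\f h \cap \f n$, factors through a finite-codimensional quotient of $U(\f h)$ stable under all derivations of $\f h$, and therefore extends one dimension at a time; one also needs the characteristic-zero fact $[\f g, \f r] \subseteq \f n$ to preserve nilpotency of the action on the nilradical through the induction, which is what keeps every stage finite-dimensional and keeps the restriction to $\f n$ faithful. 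Your observation that the derivation of $U(\f n)$ extending $\ad$ of $n \in \f n$ is $L_n - R_n$ rather than $L_n$ names the obstruction accurately, but ``the extra right-multiplication term must be absorbed by a careful choice of module'' is precisely the content of Ado's theorem, and it is left unsupplied; as written, the argument would not close.
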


\begin{proposition}\label{prop:inner_derivations}
  An inner derivation maps $\f g$ into $[\f g, \f g]$ and stabilizes
  ideals.
\end{proposition}

\begin{proof}
  Let $D$ be an inner derivation,
  so $D = \ad x$ for some $x \in \f g$.
  Let $y \in \f g$ be arbitrary and notice
  $D(y) = \ad x (y) = [x,y] \in [\f g,\f g]$,
  verifying the first assertion.
  Next, let $\f a \unlhd \f g$.
  $D(\f a) = \ad x (\f a) = [x,\f a] \subseteq \f a$
  by the definition of ideal.
\end{proof}

An outer derivation does not necessarily stabilize ideals;
however, the derived algebra and center of $\f g$ are stabilized by
outer derivation.

\begin{proposition}\label{prop:derivations_stabilize_these}
  Let $D$ be a derivation on an arbitrary Lie algebra $\f g$.
  $D$ stabilizes $[\f g, \f g]$ and $\f g_Z$.
\end{proposition}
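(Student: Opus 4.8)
The plan is to work directly from the Leibniz rule $D([x,y]) = [D(x),y] + [x,D(y)]$, which is the defining property of a derivation. Both assertions follow from short computations, so I do not expect a genuine obstacle; the only point requiring any care is the reduction from generators to the whole subspace in the derived-algebra case.

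First I would handle $[\f g, \f g]$. Since this subspace is spanned by elements of the form $[x,y]$ with $x, y \in \f g$, it suffices to check that $D$ sends each such generator back into $[\f g, \f g]$ and then to invoke linearity of $D$. Applying the Leibniz rule gives $D([x,y]) = [D(x), y] + [x, D(y)]$, and each summand is again a bracket of two elements of $\f g$, hence lies in $[\f g, \f g]$. Thus $D$ carries a spanning set of $[\f g, \f g]$ into $[\f g, \f g]$, and since $D$ is linear it stabilizes the whole subspace.

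Next I would treat the center $\f g_Z$. Let $z \in \f g_Z$; to show $D(z) \in \f g_Z$ I must verify that $[D(z), y] = 0$ for every $y \in \f g$. The key move is to apply the Leibniz rule not to $z$ alone but to the identically-zero bracket $[z,y]$: because $z$ is central, $[z,y] = 0$, so $0 = D([z,y]) = [D(z), y] + [z, D(y)]$. Centrality of $z$ also forces $[z, D(y)] = 0$, leaving $[D(z), y] = 0$. As $y$ was arbitrary, $D(z)$ commutes with all of $\f g$, that is, $D(z) \in \f g_Z$.

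If there is any subtlety at all, it lies only in the first part, where one must remember that $D$ need not send an arbitrary element of $[\f g, \f g]$ to a single bracket but rather to a linear combination of brackets; this is precisely why the argument is phrased on generators and then extended by linearity. The center computation is the cleaner of the two, resting on the simple observation that the vanishing bracket $[z,y]$ may still be fed through $D$.
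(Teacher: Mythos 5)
Your proof is correct and follows essentially the same route as the paper's: apply the Leibniz rule to a bracket $[x,y]$ for the derived algebra, and apply $D$ to the vanishing bracket $[z,y]$ for the center. The only difference is that you spell out the spanning-set-plus-linearity reduction for $[\f g,\f g]$, which the paper leaves implicit; this is a harmless (and slightly more careful) elaboration of the same argument.
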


\begin{proof}
  Let $x,y \in \f g$.
  \[
    D([x,y]) = [D(x),y]+[x,D(y)] \in [\f g, \f g],
  \]
  so $D$ stabilizes $[\f g,\f g]$ as desired.

  Next, let $z \in \f g_Z$.
  We want $D(z) \in \f g_Z$.
  Let $x \in \f g$ and consider $D([z,x]) = 0$.
  \[
    0 = D([z,x]) = [D(z),x] + [z,D(x)] = [D(z),x],
  \]
  so $[D(z),x] = 0$ for all $x \in \f g$, as desired.
\end{proof}

Let $\im$ denote the imaginary unit.
Let $\f g$ be a Lie algebra over $\R$.
In light of Ado's Theorem (proposition \ref{thm:ados_thm}),
$\f g$ is isomorphic to a real Lie algebra consisting of matrices with
real entries, and we think of $\f g$ in this way as we proceed in order
to avoid several issues with notation.
$\hat{\f g}$ will denote the complexification of $\f g$.
Since $\f g$ is real, we have
\[
  \hat{\f g}
  = \f g \dsum \im \f g
  = \set{x + \im y}{x,y \in \f g}
  \text{.}
\]
We note that the bracket on $\hat{\f g}$ is given by
\[
  [x+\im y, u+\im v] = [x,u] - [y,v] + \im ([x,v] + [y,u])\text{.}
\]

\begin{proposition}\label{prop:center_of_complexification}
  Let $\f g$ be real, let $\hat{\f g} = \f g \dsum \im \f g$ be the
  complexification of $\f g$.
  Then the center of $\hat{\f g}$ is the complexification of the center
  of $\f g$, namely
  $\hat{\f g}_Z = \widehat{\f g_Z} = \f g_Z \dsum \im \f g_Z$.
\end{proposition}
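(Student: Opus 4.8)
The plan is to prove the set equality $\hat{\f g}_Z = \f g_Z \dsum \im \f g_Z$ by establishing the two inclusions separately. Throughout I would write a generic element of $\hat{\f g}$ as $x + \im y$ with $x, y \in \f g$, and I would use the bracket formula for $\hat{\f g}$ displayed just before the statement, together with the fact that the real and imaginary components of an element of $\hat{\f g}$ are uniquely determined, since the sum $\f g \dsum \im \f g$ is direct.

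For the inclusion $\f g_Z \dsum \im \f g_Z \subseteq \hat{\f g}_Z$, I would take an arbitrary $a + \im b$ with $a, b \in \f g_Z$ and an arbitrary $u + \im v \in \hat{\f g}$, then expand $[a + \im b, u + \im v]$ using the bracket formula. Each of the four resulting brackets $[a,u]$, $[b,v]$, $[a,v]$, $[b,u]$ has a central element of $\f g$ in one slot, hence vanishes, so the whole bracket is $0$. Since $u + \im v$ was arbitrary, this shows $a + \im b \in \hat{\f g}_Z$.

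The reverse inclusion $\hat{\f g}_Z \subseteq \f g_Z \dsum \im \f g_Z$ is the step I expect to carry the real content, though it too is short. I would take $x + \im y \in \hat{\f g}_Z$ and test centrality against the real elements $u \in \f g \subseteq \hat{\f g}$ alone. The bracket formula then collapses to $[x + \im y, u] = [x,u] + \im [y,u]$, which is precisely the decomposition of an element of $\hat{\f g}$ into its real and imaginary parts, because $[x,u]$ and $[y,u]$ both lie in $\f g$. The key point—and the only place the reality of $\f g$ enters—is that this forces the two components to vanish independently: $[x,u] = 0$ and $[y,u] = 0$ for every $u \in \f g$. By the definition of the center this says $x \in \f g_Z$ and $y \in \f g_Z$, whence $x + \im y \in \f g_Z \dsum \im \f g_Z$.

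Combining the two inclusions yields $\hat{\f g}_Z = \f g_Z \dsum \im \f g_Z$, and the identification of this set with $\widehat{\f g_Z}$ is immediate from the definition of complexification. I would remark that restricting attention to real test elements $u$ in the reverse inclusion loses no generality: an arbitrary complex test element $u + \im v$ contributes nothing beyond what $u$ and $v$ contribute separately, so centrality against all of $\f g$ is in fact equivalent to centrality against all of $\hat{\f g}$.
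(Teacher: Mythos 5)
Your proof is correct, and on the hard inclusion it takes a genuinely more economical route than the paper. Both arguments expand the complexified bracket and use the directness of the sum $\f g \dsum \im \f g$, but the paper tests a central $z = x + \im y$ against a fully general $w = u + \im v$, obtaining the pair of relations $[x,u]=[y,v]$ and $[x,v]=-[y,u]$, which it then adds and specializes twice (first $v=u$, then $u=-v$) to conclude $x,y \in \f g_Z$. You instead test against real elements $u \in \f g$ only, so the bracket collapses to $[x,u] + \im\, [y,u] = 0$, and directness of the sum forces both components to vanish at once; this amounts to setting $v=0$ in the paper's relations and skips the equation-combining entirely. The paper's extra generality buys nothing here, since its own specializations discard it, so your streamlining is a genuine improvement in clarity. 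You also write out the easy inclusion, which the paper dismisses as clear, and your closing remark (that centrality against $\f g$ alone is equivalent to centrality against all of $\hat{\f g}$) is correct but logically superfluous: the reverse inclusion only ever needs the weaker condition, which is automatic for a central element.
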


\begin{proof}
  Let $z \in \hat{\f g}_Z$. Write $z = x + \im y$ with $x, y \in \f g$.
  Now, for arbitrary $w = u + \im v \in \hat{\f g}$ with $u, v \in \f g$
  we have
  \begin{align*}
    0 &= [z,w] = [x + \im y, u + \im v]
    \\ &= [x,u] - [y,v] + \im ([x,v] + [y,u])
  \end{align*}
  and by direct sum decomposition $[x,u] = [y,v]$ and $[x,v] = - [y,u]$.
  Adding these equations gives
  \begin{equation}\label{eq:funny_sum}
    \forall u,v \in \f g, \quad [x,u+v] = [y,v-u]
  \end{equation}

  Setting $v = u$ in \eqref{eq:funny_sum} produces $[x,2u]=0$
  for all $u \in \f g$, so $x \in \f g_Z$.
  Similarly, setting $u = -v$ in \eqref{eq:funny_sum} produces
  $0=[y,2v]$ for all $v \in \f g$, so $y \in \f g_Z$, giving
  $\hat{\f g}_Z \subseteq \widehat{\f g_Z}$.
  The reverse inclusion is clear.
\end{proof}

\begin{theorem}
  Let $\f g$ be a semisimple (res. reductive) Lie algebra over $\R$.
  The complexification $\hat{\f g}$ of $\f g$ is semisimple
  (res. reductive) \cite[Ch. VI, \S 9]{knapp2002lie}.
\end{theorem}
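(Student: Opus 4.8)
The plan is to reduce both assertions to Cartan's criterion, which states that a finite-dimensional Lie algebra over a field of characteristic zero is semisimple if and only if its Killing form is nondegenerate. The key observation is that complexification interacts transparently with the Killing form. First I would fix a basis $\{e_1,\dots,e_n\}$ of $\f g$ over $\R$; since $\hat{\f g} = \f g \dsum \im \f g$, this same set serves as a basis of $\hat{\f g}$ over $\C$, and the structure constants are identical in both algebras. Consequently, for $x \in \f g$ the matrix of $\ad x$ acting on $\hat{\f g}$ in this basis is exactly the real matrix of $\ad x$ acting on $\f g$. It follows that the Killing form $\kappa_{\hat{\f g}}$ of the complexification, restricted to $\f g \times \f g$, coincides with the Killing form $\kappa_{\f g}$ of $\f g$, and that $\kappa_{\hat{\f g}}$ is the $\C$-bilinear extension of $\kappa_{\f g}$.

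With this in hand the semisimple case follows at once. The Gram matrix of $\kappa_{\hat{\f g}}$ with respect to $\{e_i\}$ is precisely the real Gram matrix of $\kappa_{\f g}$; since a real matrix is invertible over $\R$ if and only if it is invertible over $\C$, the form $\kappa_{\hat{\f g}}$ is nondegenerate exactly when $\kappa_{\f g}$ is. Cartan's criterion then gives that $\hat{\f g}$ is semisimple if and only if $\f g$ is semisimple.

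For the reductive case I would invoke the decomposition $\f g = \f g_Z \oplus \f g_S$ into the center and the maximal semisimple ideal. Complexifying, one checks directly from the bracket formula on $\hat{\f g}$ that $\widehat{\f g_Z}$ and $\widehat{\f g_S}$ are ideals and that $\hat{\f g} = \widehat{\f g_Z} \oplus \widehat{\f g_S}$ as a direct sum of ideals. Here $\widehat{\f g_Z}$ is abelian, being the complexification of an abelian algebra, whereas $\widehat{\f g_S}$ is semisimple by the case just established. Proposition \ref{prop:center_of_complexification} identifies $\widehat{\f g_Z}$ with the center $\hat{\f g}_Z$, so $\hat{\f g}$ is the direct sum of its center and a semisimple ideal, which is exactly the statement that $\hat{\f g}$ is reductive.

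The step I expect to demand the most care is the claim that the Killing form commutes with complexification---specifically, verifying that the adjoint matrices, and hence the trace form they define, are genuinely unchanged upon passing from the real basis to the complex basis. Once that identity is secured, both assertions rest on the elementary fact that a real matrix is nonsingular over $\R$ precisely when it is nonsingular over $\C$, together with Cartan's criterion and the behavior of the center under complexification recorded in Proposition \ref{prop:center_of_complexification}.
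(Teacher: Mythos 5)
Your proof is correct, but there is nothing in the paper to compare it against: the paper states this theorem as a known result and defers entirely to the citation of Knapp, offering no argument of its own. What you have done is supply the standard proof that lies behind that citation. The semisimple case is sound: a real basis of $\f g$ is also a $\C$-basis of $\hat{\f g}$ with the same structure constants, so for $x \in \f g$ the matrix of $\ad x$ is the same in both algebras; hence $\kappa_{\hat{\f g}}$ is the $\C$-bilinear extension of $\kappa_{\f g}$, its Gram matrix in this basis is the real Gram matrix of $\kappa_{\f g}$, and nondegeneracy transfers in both directions because a real matrix has nonzero determinant over $\R$ if and only if it does over $\C$; Cartan's criterion (valid in characteristic zero) finishes the argument. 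The reductive case is also handled correctly: $\hat{\f g} = \widehat{\f g_Z} \oplus \widehat{\f g_S}$ is a direct sum of ideals, the first abelian and the second semisimple by the case just proved, and this matches the paper's working definition of reductive, namely $\f g = \f g_Z \oplus \f g_S$. Your appeal to Proposition \ref{prop:center_of_complexification} involves no circularity, since that proposition is proved in the paper before and independently of this theorem; in fact you could omit it, because in a direct sum of an abelian ideal and a semisimple ideal the abelian summand is automatically the full center. The one point that deserves the explicit care you flagged---that the adjoint matrices, and hence the trace form, are unchanged under complexification---is exactly the point you verified, so the argument is complete and self-contained.
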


\begin{proposition}
  Let $D$ be a derivation of the real Lie algebra $\f g$.
  Then $\hat D$ defined by $\hat D (x + \im y) = D(x) + \im D(y)$
  is a derivation of $\hat{\f g}$.
  Moreover, $\hat D$ stabilizes $\f g$.
\end{proposition}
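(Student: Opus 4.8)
The plan is to verify two things directly from the definitions: first that $\hat D$ is well-defined and $\C$-linear, and second that it satisfies the Leibniz rule on $\hat{\f g}$; the stabilization of $\f g$ will then be immediate.

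First I would establish linearity. Since every element of $\hat{\f g}$ has a unique expression as $x + \im y$ with $x, y \in \f g$ (by the direct sum decomposition $\hat{\f g} = \f g \dsum \im \f g$), the formula $\hat D(x + \im y) = D(x) + \im D(y)$ is well-defined. Additivity is clear from the additivity of $D$. For $\C$-linearity it suffices to check compatibility with scalar multiplication by $\im$, since $\R$-linearity follows from that of $D$; I would compute $\hat D(\im(x + \im y)) = \hat D(-y + \im x) = -D(y) + \im D(x) = \im(D(x) + \im D(y)) = \im \hat D(x + \im y)$.

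The main content is the Leibniz rule, and this is the step I expect to require the most care, though it is still a routine bracket computation. Using the explicit bracket formula on $\hat{\f g}$ recorded in the excerpt, I would expand $\hat D([x + \im y, u + \im v])$ by applying $\hat D$ to the real and imaginary parts of the bracket, and separately expand $[\hat D(x + \im y), u + \im v] + [x + \im y, \hat D(u + \im v)]$. Each side becomes a combination of terms of the form $[D(\cdot), \cdot]$ and $[\cdot, D(\cdot)]$; grouping these and applying the Leibniz rule for $D$ on $\f g$ to each of the four products $[x,u]$, $[y,v]$, $[x,v]$, $[y,u]$ shows the two sides agree. The only thing to watch is the bookkeeping of signs and the factor of $\im$, which the bracket formula manages automatically.

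Finally, stabilization of $\f g$ is immediate from the construction: for $x \in \f g$ we have $x = x + \im \cdot 0$, so $\hat D(x) = D(x) + \im \cdot 0 = D(x) \in \f g$, and hence $\hat D(\f g) \subseteq \f g$.
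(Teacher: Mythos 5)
Your proposal is correct and follows essentially the same route as the paper: a direct expansion of $\hat D([z,w])$ using the complexified bracket formula, term-by-term application of the Leibniz rule for $D$ on the four products $[x,u]$, $[y,v]$, $[x,v]$, $[y,u]$, and the observation that $\hat D(x + \im 0) = D(x)$ gives stabilization of $\f g$. Your explicit check of well-definedness and $\C$-linearity (compatibility with multiplication by $\im$) is a small addition that the paper leaves implicit, but it does not change the substance of the argument.
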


\begin{proof}
  Let $z = x + \im y, w = u + \im v$ be arbitrary elements of
  $\hat{\f g}$.
  \begin{align*}
    \hat D ([z,w]) &= \hat D ( [x+\im y,u+\im v] )
    \\ &= \hat D \big( [x,u]-[y,v] + \im ( [x,v]+[y,u] ) \big)
    \\ &= D([x,u]-[y,v]) + \im D ([x,v]+[y,u])
    \\ &= D([x,u]) - D([y,v]) + \im ( D([x,v]) + D([y,u]) )
    \\ &= [D(x),u] + [x,D(u)] - [D(y),v] - [y,D(v)]
    \\ &\quad+ \im ( [D(x),v] + [x,D(v)] + [D(y),u] + [y,D(u)] )
    \\ &= [D(x),u + \im v] + [x + \im y,D(u)]
    + \im [x + \im y, D(v)] + \im [ D(y),u + \im v]
    \\ &= [D(x), w] + \im [D(y), w] + [z,D(u)] + \im[z,D(v)]
    \\ &= [D(x) + \im D(y), w] + [z, D(u) + \im D(v)]
    \\ &= \left[\hat D (z),w\right] + \left[z,\hat D (w)\right]
  \end{align*}
  So $\hat D$ is a derivation on $\hat{\f g}$.

  $\hat D$ stabilizes $\f g$ by definition.
  Indeed, if $x \in \f g$, then
  \[
    \hat D (x) = \hat D (x + \im 0) = D(x) \in \f g\text{.}\qedhere
  \]
\end{proof}

\begin{proposition}
  Let $\f q = \f g_Z \oplus \f q_S$ be a parabolic subalgebra
  of the reductive Lie algebra $\f g = \f g_Z \oplus \f g_S$
  over $\K$ or over $\R$.
  The center of $\f q$ is $\f g_Z$.
\end{proposition}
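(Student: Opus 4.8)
The plan is to prove the two inclusions $\f g_Z \subseteq \f q_Z$ and $\f q_Z \subseteq \f g_Z$ separately. The first is immediate: every element of $\f g_Z$ commutes with all of $\f g$, hence with the subalgebra $\f q$, and $\f g_Z \subseteq \f q$ by the decomposition $\f q = \f g_Z \oplus \f q_S$, so $\f g_Z \subseteq \f q_Z$. For the reverse inclusion I would take $z \in \f q_Z$ and write $z = z_0 + s$ with $z_0 \in \f g_Z$ and $s \in \f q_S$. Since $z_0$ and $z$ are both central in $\f q$, the difference $s = z - z_0$ is central in $\f q$ as well; moreover $s \in \f g_S$ commutes automatically with $\f g_Z$, so being central in $\f q = \f g_Z \oplus \f q_S$ is the same as being central in $\f q_S$. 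Thus the whole statement reduces to showing that the center of $\f q_S = \f q \cap \f g_S$, a parabolic subalgebra of the semisimple algebra $\f g_S$, is trivial.

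For the algebraically closed case I would argue directly from the root space decomposition. Fix a Cartan subalgebra $\f h \leq \f q_S$ and let $\Phi$ be the root system of $\f g_S$. Since $\f q_S$ is parabolic it contains a Borel subalgebra, hence $\f h$ together with every positive root vector $e_\alpha$, $\alpha \in \Phi^+$. Writing a putative central element as $h + \sum_\alpha c_\alpha e_\alpha$ and bracketing against arbitrary $h' \in \f h$ forces $c_\alpha \, \alpha(h') = 0$; as each root $\alpha$ is a nonzero functional, this kills every $c_\alpha$, so the element lies in $\f h$. Bracketing that element $h$ against the positive root vectors then gives $\alpha(h) = 0$ for all $\alpha \in \Phi^+$, and since the positive roots span $\f h^*$ we conclude $h = 0$, so $(\f q_S)_Z = 0$.

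For the real case I would pass to the complexification. The algebra $\f q_S$ is real, so applying proposition \ref{prop:center_of_complexification} to it yields $(\hat{\f q_S})_Z = \widehat{(\f q_S)_Z}$. The complexification $\hat{\f q_S}$ is a parabolic subalgebra of the complex semisimple algebra $\hat{\f g_S}$, so the algebraically closed case already establishes $(\hat{\f q_S})_Z = 0$. Hence $\widehat{(\f q_S)_Z} = 0$, which forces $(\f q_S)_Z = 0$. In either case the reduction gives $s = 0$, so $z = z_0 \in \f g_Z$, and therefore $\f q_Z = \f g_Z$.

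I expect the main obstacle to be the real case, precisely because a real parabolic subalgebra does not admit the clean split root space description used above: its Cartan need not be split, so some roots are complex and the eigenvalue argument does not apply verbatim. Routing through the complexification sidesteps this, but it relies on the identity $(\hat{\f q_S})_Z = \widehat{(\f q_S)_Z}$ from proposition \ref{prop:center_of_complexification} and on the fact that complexifying a real parabolic produces a complex parabolic; confirming the latter, together with the fact that $\f q_S$ inherits a Borel of $\f g_S$ from a Borel of $\f g$ contained in $\f q$, is the one place where I would check the structural hypotheses carefully.
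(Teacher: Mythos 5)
Your proposal is correct and takes essentially the same route as the paper: reduce to the semisimple part via the splitting $\f q = \f g_Z \oplus \f q_S$, kill the center there by the two root-space bracketing arguments (against the Cartan subalgebra to eliminate root-vector coefficients, then against positive root vectors to eliminate the Cartan component), and settle the real case by complexification together with proposition \ref{prop:center_of_complexification}. The only differences are organizational: the paper bounds $\f q_Z$ by $Z_{\f g}(\f b)$ and shows the latter vanishes rather than computing $(\f q_S)_Z$ directly, and in the real case it complexifies the full $\f q$ and separates real and imaginary parts via the matrix realization from Ado's theorem, whereas your reduction-first order lets you complexify only $\f q_S$ and conclude immediately from $\widehat{(\f q_S)_Z} = 0$.
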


\begin{proof}
  We consider first the special case where
  $\f g = \f h \dsum \sum_{\beta \in \Phi}$ is semisimple over $\K$.
  We assume without loss of generality that $\f q$ is a standard
  parabolic subalgebra,
  and $\f b=\f h\dsum\sum_{\beta\in\Phi^+} \f g_{\beta}$ is the Borel
  subalgebra of $\f g$ such that $\f b\subseteq\f q$.
  Then
  \[
    {\f q}_Z =
    Z_{\f q}(\f q)\subseteq Z_{\f g}(\f q)\subseteq Z_{\f g}(\f b)
    \text{.}
  \]
  We prove that $Z_{\f g}(\f b)=0$, so that ${\f q}_Z=0$.
  Choose $x_{\alpha}\in{\f g}_{\alpha}\setminus\{0\}$ for every
  $\alpha\in\Phi$.
  Let
  \[
    z=z_{\f h}+\sum_{\alpha\in\Phi}c_{\alpha}x_{\alpha}
  \]
  be an arbitrary element of  $Z_{\f g}(\f b)$,
  where $z_{\f h}\in\f h$, $c_{\alpha}\in \K$.
  \begin{enumerate}
    \item
    For every $\beta\in\Phi^+$, $x_{\beta}\in \f b$ so that
    \[
      0=[z,x_{\beta}]
      =[z_{\f h}, x_{\beta}]
      +\sum_{\alpha\in\Phi} c_{\alpha}[x_{\alpha},x_{\beta}]\
      \in\ \beta(z_{\f h}) x_{\beta}
      +\left(
      \f h+\sum\set{\f g_{\alpha}}{\alpha\in\Phi\setminus\{\beta\}}
      \right)
      \text{.}
    \]
    Therefore, $\beta(z_{\f h})=0$ for every $\beta\in\Phi^+$,
    so that $z_{\f h}=0$.
    \item
    Every $h\in\f h$ is also in $\f b$. So
    \[
      0=[z,h]
      =\sum_{\alpha\in\Phi}c_{\alpha}[x_{\alpha}, h]
      =-\sum_{\alpha\in\Phi}\alpha(h)c_{\alpha}x_{\alpha}
      \text{.}
    \]
    For every $\alpha\in\Phi$, we may choose $h\in\f h$ such that
    $\alpha(h)\ne 0$.
    Then $c_{\alpha}=0$.
  \end{enumerate}

  The above argument shows that $z=0$.
  Therefore, $\f q_Z=Z_{\f g}(\f b)=0$.

  Having established that $\f q_Z = 0$ when $\f g$ is semisimple over
  $\K$,
  that $\f q_Z = \f g_Z$ when $\f g$ is reductive over $\K$ follows from
  the Lie algebra direct sum decomposition
  $\f q = \f g_Z \oplus \f q_S$.
  We now consider the case where $\f q$ is a parabolic subalgebra of a
  real reductive $\f g$.
  We have $\hat{\f q}$ is a parabolic subalgebra of $\hat{\f g}$
  by definition.
  Then
  \begin{equation}\label{eq:center_of_real_parabolic}
    \f g_Z + \im \f g_Z = \widehat{(\f g_Z)}
    = \underbrace{
      (\hat{\f g})_Z = (\hat{\f q})_Z
    }_{\text{by above case}}
    = \widehat{(\f q_Z)} = \f q_Z + \im \f q_Z
    \text{.}
  \end{equation}
  Finally, by Ado's Theorem (theorem \ref{thm:ados_thm}), we may assume
  that $\f g$ consists of real matrices, so that we may separate the
  real and imaginary part in \eqref{eq:center_of_real_parabolic},
  giving $\f g_Z = \f q_Z$, as desired.
\end{proof}

What follows of this section is developed more completely in chapter V,
section 7 of \cite{knapp2002lie} in case $\f g$ is over $\K$
and in chapter VII,
section 7 of \cite{knapp2002lie} in case $\f g$ is over $\R$.

Let $\f g$ be semisimple over $\K$ or over $\R$ and let $\f q \leq \f g$
be a parabolic subalgebra.
Without loss of generality, we may assume that $\f q$ arises as a
standard parabolic subalgebra from a (restricted) root space
decomposition of $\f g$.

In the algebraically closed case, we have the following situation:
\begin{itemize}
  \item[] $\f g = \f h \dsum \sum_{\beta \in \Phi}{\f g_\beta}$,
  where
  \item[] $\f h$ is a Cartan subalgebra of $\f g$,
  \item[] $\Phi$ is the root system of $\f g$ relative to $\f h$,
  \item[] $\Delta$ is a base of $\Phi$,
  \item[] $\Delta' \subseteq \Delta$ is the subset of $\Delta$
  corresponding to $\f q$, and
  \item[] $\Phi' = \Phi^+ \cup \left( \Phi\cap \Span \Delta'\right)$.
\end{itemize}
Then $\f q = \f h \dsum \sum_{\beta \in \Phi'}{\f g_{\beta}}$.

Considering the case where $\f g$ is real, we have the analogous
situation:
\begin{itemize}
  \item[] $\f g = \f a \dsum \f m \dsum
  \sum_{\lambda \in \Phi}{\f g_\lambda}$ where,
  \item[] $\f g = \f k \dsum \f p$ is the Cartan decomposition of
  $\f g$,
  \item[] $\f a$ is a maximal abelian subspace of $\f p$,
  \item[] ${\f m}=Z_{\f k}({\f a})$ is the centralizer of $\f a$ in
  $\f k$,
  \item[] $\Phi$ is the restricted root system of $\f g$ relative to
  $\f a$,
  \item[] $\Delta$ is a set of simple restricted roots of $\Phi$,
  \item[] $\Delta' \subseteq \Delta$ is the subset of $\Delta$
  corresponding to $\f q$, and
  \item[] $\Phi' = \Phi^+ \cup \left( \Phi\cap \Span \Delta'\right)$,
\end{itemize}
so that $\f q = \f a \dsum \f m
\dsum\sum\set{\f g_{\lambda}}{\lambda \in \Phi'}$.

$\Phi'$ may be partitioned into two subsets, $\Phi' \cap -\Phi'$ and
$\Phi' \setminus -\Phi'$. This partition of $\Phi'$ results in a vector
space direct sum decomposition of $\f q$ as $\f q = \f l \dsum \f n$,
where
\[
  \f l = \f h \dsum \sum \set{\f g_\beta}{\beta \in \Phi' \cap -\Phi'}
\]
in case $\f g$ is over an algebraically closed field, or
\[
  \f l = \f a \dsum \f m \dsum
  \sum \set{\f g_\beta}{\beta \in \Phi' \cap -\Phi'}
\]
in case $\f g$ is over $\R$, and
\[
  \f n = \sum\set{\f g_\beta}{\beta \in \Phi' \setminus -\Phi'}\text{.}
\]

\begin{theorem}
  Notation as above, $\f n$ is an ideal of $\f q$, $\f l$ is a
  subalgebra of $\f q$, and $\f l$ is reductive
  \cite[Ch. V, \S 7]{knapp2002lie} \cite[Ch. VII, \S 7]{knapp2002lie}.
\end{theorem}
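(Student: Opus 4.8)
The plan is to first re-express the two root subsets in a more usable form and then verify each of the three assertions by root-space bookkeeping. Write each root uniquely as $\beta = \sum_{s \in \Delta} n_s(\beta)\, s$ with the integer coefficients $n_s(\beta)$ all of one sign, and define the height outside $\Delta'$ by $h(\beta) = \sum_{s \in \Delta \setminus \Delta'} n_s(\beta)$. Setting $\Psi = \Phi \cap \Span \Delta'$, one checks directly that $\Psi = \set{\beta \in \Phi}{h(\beta) = 0}$, that $\Phi' \cap -\Phi' = \Psi$, and that $\Phi' \setminus -\Phi' = \set{\beta \in \Phi^+}{h(\beta) > 0}$. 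Thus $\f l$ is $\f h$ (or $\f a \dsum \f m$ in the real case) together with the root spaces for $\Psi$, while $\f n$ is the sum of the root spaces on which $h$ is strictly positive. The key arithmetic fact driving everything is that $h$ is additive, $h(\alpha + \beta) = h(\alpha) + h(\beta)$, and that a root $\gamma$ with $h(\gamma) > 0$ is necessarily positive, since its coefficients share a sign.

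To show $\f n \unlhd \f q$, I would bracket a root vector in $\f g_\beta$ with $\beta \in \Phi' \setminus -\Phi'$, so that $h(\beta) \geq 1$, against an arbitrary generator of $\f q$. Brackets with $\f h$ (resp. $\f a \dsum \f m$) preserve $\f g_\beta$ and hence land in $\f n$. For a bracket against $\f g_\gamma$ with $\gamma \in \Phi'$ we use $[\f g_\gamma, \f g_\beta] \subseteq \f g_{\gamma + \beta}$; since $\gamma \in \Phi'$ forces $h(\gamma) \geq 0$, additivity gives $h(\gamma + \beta) \geq 1 > 0$, so whenever $\gamma + \beta$ is a root it lies in $\Phi' \setminus -\Phi'$ and the bracket falls back into $\f n$ (and $\gamma + \beta \neq 0$ since $h$ is nonzero there). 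This is the core computation, and it is routine once $h$ is in hand. For $\f l \leq \f q$ the same bracket relations suffice: $\Psi$ is closed under root addition within $\Phi$ because $\Span \Delta'$ is a subspace, so $[\f g_\alpha, \f g_\beta] \subseteq \f g_{\alpha + \beta} \subseteq \f l$ for $\alpha, \beta \in \Psi$ when $\alpha + \beta \in \Phi$, while the case $\alpha + \beta = 0$ lands in $\f h \subseteq \f l$ and brackets with the Cartan part again preserve each root space.

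The genuinely substantive assertion is that $\f l$ is reductive. In the algebraically closed case I would split the Cartan subalgebra as $\f h = \f h' \dsum \f z$, where $\f h' = \Span \set{h_\beta}{\beta \in \Psi}$ is the span of the coroots attached to $\Psi$ and $\f z = \bigcap_{\beta \in \Psi} \ker \beta$. Since the simple coroots for $\Delta'$ are linearly independent and pair nondegenerately with $\Delta'$ (the relevant Cartan matrix is invertible), these two subspaces meet trivially and together exhaust $\f h$ by a dimension count. Then $\f l_S = \f h' \dsum \sum_{\beta \in \Psi} \f g_\beta$ is semisimple with Cartan subalgebra $\f h'$ and root system $\Psi$, the subspace $\f z$ is central in $\f l$ by construction, and $\f l = \f z \oplus \f l_S$ exhibits $\f l$ as the direct sum of an abelian ideal and a semisimple ideal, hence reductive.

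I expect the reductivity of $\f l$ to be the main obstacle, since it requires knowing that the closed symmetric subsystem $\Psi \subseteq \Phi$ really is the root system of the semisimple algebra $\f l_S$, rather than merely a closed subset. The real case is more delicate still because of the compact factor $\f m$: reductivity of $\f l = \f a \dsum \f m \dsum \sum_{\beta \in \Psi} \f g_\beta$ does not follow from the root arithmetic alone, and instead rests on the Langlands decomposition of the real Levi factor. At that point I would appeal to the cited structure theory in \cite{knapp2002lie} rather than reproduce it.
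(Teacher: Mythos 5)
Your proposal is correct, but it differs from the paper in a basic way: the paper does not prove this theorem at all. It is one of the \emph{credited} statements in the preliminaries (per the paper's stated convention that only uncredited propositions receive proofs), and is deferred wholesale to Knapp, Ch.~V \S 7 for the algebraically closed case and Ch.~VII \S 7 for the real case. You instead supply an actual argument, and the combinatorial core of it is sound: defining $h(\beta)=\sum_{s\in\Delta\setminus\Delta'}n_s(\beta)$, noting its additivity, and identifying $\Phi'\cap-\Phi'=\Phi\cap\Span\Delta'$ and $\Phi'\setminus-\Phi'=\set{\beta\in\Phi^+}{h(\beta)>0}$ gives clean, correct proofs that $\f n\unlhd\f q$ and $\f l\leq\f q$, valid verbatim in the restricted-root setting as well (using that $\ad \f m$ preserves each restricted root space). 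Your reductivity argument is also structured correctly for the algebraically closed case --- split $\f h=\f h'\dsum\f z$ into the coroot span of $\Psi$ and the common kernel $\bigcap_{\beta\in\Psi}\ker\beta$ (invertibility of the Cartan matrix of $\Delta'$ gives the trivial intersection), observe $\f z$ is central in $\f l$, and write $\f l=\f z\oplus\f l_S$ --- but, as you yourself flag, the assertion that $\f l_S$ is semisimple with root system $\Psi$ is exactly where the substance lies; a complete treatment needs, e.g., nondegeneracy of the Killing form of $\f l_S$, or the observation that $\f l$ is the centralizer in $\f g$ of the $\ad$-semisimple abelian subalgebra $\f z$, and centralizers of such subalgebras in reductive algebras are reductive. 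That point, and the real case with its compact factor $\f m$, you defer to Knapp --- which is not a gap relative to the paper, since the paper defers everything to the same source. In short: the paper's pure citation buys brevity and consistency with its preliminaries convention; your route buys a self-contained proof of the ideal and subalgebra claims and makes the partition of $\Phi'$ transparent, at the cost of length, while still resting on the cited structure theory for the genuinely hard step.
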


$\f l$ is called the \emph{Levi factor} of $\f q$ and $\f n$ is called
the \emph{nilradical} of $\f q$.
The decomposition $\f q = \f l \ltimes \f n$ is referred to as
\emph{Langland's decomposition}.

We extend this terminology and notion to the case where
$\f g = \f g_Z \oplus \f g_S$ is reductive and
$\f q = \f g_Z \oplus \f q_S$, by simply writing
\[
  \f q = \f g_Z \oplus \left( \f l \dsum \f n \right)
\]
where $\f l$ is the Levi factor and $\f n$ is the nilpotent radical of
$\f q_S$.
In such case, we say $\f l$ (res. $\f n$) is the Levi factor
(res. nilradical) of $\f q$ and of $\f q_S$ interchangeably.

\section{The algebraically closed case}\label{sec:complex_case}

Throughout this section we use the following notational conventions:
\begin{itemize}
  \item[]
  $\K$ denotes an algebraically closed, characteristic zero field;
  \item[]
  $\f g = \f g_Z \oplus \f g_S$ denotes a reductive Lie algebra over
  $\K$, where
  \item[]
  $\f g_Z$ is the center of $\f g$, and
  \item[]
  $\f g_S$ is the maximal semisimple ideal of $\f g$;
  \item[]
  $\f q = \f g_Z \oplus \f q_S$ is a given parabolic subalgebra of
  $\f g$, where
  \item[]
  $\f q_S = \f q \cap \f g_S$ is a parabolic subalgebra of $\f g_S$.
\end{itemize}
We choose a Cartan subalgebra $\f h$, a root system $\Phi$,
and a base $\Delta$ compatible with $\f q_S$ in the sense that
$\f q_S$ is a standard parabolic subalgebra of $\f g_S$ relative to
$(\f h, \Phi, \Delta)$ and corresponds to a subset
$\Delta' \subseteq \Delta$.
Then
\[
  \f q_S=\f h \dsum \sum_{\alpha\in\Phi'}\K x_{\alpha}
\]
where
\[
  \Phi' = \Phi^+ \cup \left( \Phi\cap \Span \Delta'\right)
\]
and where each $x_\alpha$ is chosen arbitrarily from the one-dimensional
root space it spans.

Define $\f t$ and $\f c$ by
\begin{itemize}
  \item[]
  $\f  t = \f h \cap [\f q, \f q]$ and
  \item[]
  $\f c = \Span \set{[x_\alpha,x_{-\alpha}]}%
  {\alpha \in \Delta \setminus \Delta'}$.
\end{itemize}

\begin{claim*}
  $\f h$ decomposes as $\f h = \f c \dsum \f t$.
\end{claim*}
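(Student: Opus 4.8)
The plan is to show that $\f t$ and $\f c$ are spanned by \emph{complementary} subsets of the basis of simple coroots of $\f h$, from which the decomposition $\f h = \f c \dsum \f t$ is immediate. Write $h_\alpha := [x_\alpha, x_{-\alpha}]$ for $\alpha \in \Phi$. I would first recall the standard fact that each $h_\alpha$ is a nonzero element of $\f h$ and that $\{h_\alpha \mid \alpha \in \Delta\}$ is a basis of $\f h$, the elements being nonzero scalar multiples of the simple coroots. Since $\f g_Z$ is central in $\f q$, we have $[\f q, \f q] = [\f q_S, \f q_S]$, so I can work entirely inside $\f q_S = \f h \dsum \sum_{\alpha \in \Phi'} \K x_\alpha$. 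With this notation the target identities become $\f t = \Span \set{h_\alpha}{\alpha \in \Delta'}$ and $\f c = \Span \set{h_\alpha}{\alpha \in \Delta \setminus \Delta'}$.

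The heart of the argument is the identification of $\f t = \f h \cap [\f q, \f q]$. For the containment $\supseteq$: if $\alpha \in \Delta'$ then $\alpha \in \Phi^+ \subseteq \Phi'$ and $-\alpha \in \Phi \cap \Span \Delta' \subseteq \Phi'$, so both $x_\alpha$ and $x_{-\alpha}$ lie in $\f q_S$, whence $h_\alpha = [x_\alpha, x_{-\alpha}] \in \f h \cap [\f q, \f q] = \f t$. For $\subseteq$ I would use that $\f q_S$ is graded by $\f h$-weight, so $[\f q_S, \f q_S]$ is a graded subspace whose weight-zero component is exactly its intersection with $\f h$, namely $\f t$. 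The only brackets of weight zero are the $[x_\alpha, x_{-\alpha}]$ with both $\alpha, -\alpha \in \Phi'$, i.e. $\alpha \in \Phi' \cap -\Phi'$; a short computation gives $\Phi' \cap -\Phi' = \Phi \cap \Span \Delta'$. Finally, since $\Phi \cap \Span \Delta'$ is the root subsystem generated by $\Delta'$, every one of its coroots lies in the span of the coroots of $\Delta'$, so $\Span \set{h_\alpha}{\alpha \in \Phi \cap \Span \Delta'} = \Span \set{h_\alpha}{\alpha \in \Delta'}$, completing the identification of $\f t$.

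The subspace $\f c$ needs no further work: each defining generator $[x_\alpha, x_{-\alpha}]$ is precisely the nonzero coroot multiple $h_\alpha$, so $\f c = \Span \set{h_\alpha}{\alpha \in \Delta \setminus \Delta'}$ directly from the definition. Thus $\f t$ and $\f c$ are the spans of the complementary subsets $\Delta'$ and $\Delta \setminus \Delta'$ of the basis $\{h_\alpha \mid \alpha \in \Delta\}$ of $\f h$; their sum is all of $\f h$ and their intersection is $0$ by linear independence of a basis, which is exactly $\f h = \f c \dsum \f t$. I expect the main obstacle to be the $\subseteq$ direction of the identification of $\f t$: making the grading argument precise (that the weight-zero part of $[\f q_S, \f q_S]$ exhausts $\f h \cap [\f q, \f q]$) and correctly invoking the coroot-span fact for the sub-root-system generated by $\Delta'$. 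Everything else is bookkeeping with the basis of simple coroots.
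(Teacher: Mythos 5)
Your proposal is correct and takes essentially the same route as the paper: both arguments rest on expressing $\f t$ and $\f c$ as the spans of the complementary subsets $\Delta'$ and $\Delta \setminus \Delta'$ of the coroot basis $\set{[x_\alpha,x_{-\alpha}]}{\alpha \in \Delta}$ of $\f h$, from which the direct-sum decomposition is immediate. The only difference is one of detail: the paper simply asserts the identity $\f t = \Span \set{[x_\alpha,x_{-\alpha}]}{\alpha \in \Delta'}$ as an observation, whereas you prove it (via the weight-grading of $[\f q_S,\f q_S]$, the computation $\Phi' \cap -\Phi' = \Phi \cap \Span\Delta'$, and the coroot-span fact), which fills a gap the paper leaves to the reader.
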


\begin{proof}
  Notice that
  \[
    \f h = \Span \set{[x_\alpha,x_{-\alpha}]}{\alpha \in \Delta}
  \]
  and that
  \[
    \f t = \Span \set{[x_\alpha,x_{-\alpha}]}{\alpha \in \Delta'}
    \text{.}
  \]
  From these observations,
  we see that $\f c \cap \f t = 0$ and that
  $\Span (\f c \cup \f t) = \f h$.
\end{proof}

Noting that
$[\f q, \f q] = \f t \dsum \sum_{\alpha \in \Phi'} \K x_\alpha$,
we arrive at the desired vector space direct-sum decompositions of
$\f q$:
\begin{align*}
  \f q
  &= \f g_Z \dsum \f h \dsum \sum_{\alpha \in \Phi'} \K x_\alpha
  \\
  &= \f g_Z \dsum
  \lefteqn{
    \overbrace{\phantom{\f c \dsum \f t}}^{\f h}
  }
  \f c \dsum
  \underbrace{
    \f t \dsum \sum_{\alpha \in \Phi'} \K x_\alpha
  }_{[\f q, \f q]}
  \\
  &= \f g_Z \dsum \f c \dsum [\f q, \f q]\text{.}
\end{align*}

We take a moment to note that alternatively $\f c$,
as a direct sum complement of $[\f q,\f q]$ in $\f q_S$,
may have been chosen so that it coincides with the center of $\f l$
in Langland's decomposition $\f q_S = \f l \dsum \f n$.
This approach is not required in the algebraically closed case,
but it is taken in order to simplify the proof of proposition
\ref{prop:real_decomp} of the real case.

For the remainder of the section, we assume all of the notational
conventions described above without further mention, starting with a
restatement of the central theorem in terms of the adopted notation.

\begin{proposition}\label{prop:complex_case}
  For a parabolic subalgebra $\f q=\f g_Z \oplus \f q_S$
  of a reductive Lie algebra $\f g=\f g_Z\oplus\f g_S$ over $\K$,
  the derivation algebra $\Der \f q$ decomposes as the direct sum
  of ideals
  \[
    \Der \f q = \f L \oplus \ad \f q\text{,}
  \]
  where $\f L$ consists of all $\K$-linear transformations
  on $\f q$ mapping into $\f q_Z$ and mapping $[\f q, \f q]$ to $0$.

  Explicitly, for any root system $\Phi$ with respect to which
  $\f q$ is a standard parabolic subalgebra,
  $\f q$ decomposes as $\f q=\f g_Z \dsum \f c \dsum [\f q,\f q]$,
  and the ideal $\f L$ consists of all $\K$-linear transformations
  on $\f q$ that map $\f g_Z+\f c$ into $\f g_Z$
  and map $[\f q,\f q]$ to $0$,
  whereby
  \[
    \Der \f q \cong
    \Hom_{\K} \left( \f g_Z \dsum \f c, \f g_Z \right) \oplus \f q_S
  \]
  as Lie algebras.
\end{proposition}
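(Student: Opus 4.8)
The plan is to verify that the two summands are ideals with trivial intersection (the routine part), then to prove $\Der \f q = \f L + \ad \f q$ by an explicit root-theoretic construction (the substantive part), and finally to read off the asserted isomorphism.

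First I would record the easy containments. That $\f L \subseteq \Der \f q$ is immediate: if $L$ sends $\f q$ into $\f q_Z$ and $[\f q,\f q]$ to $0$, then for $x,y \in \f q$ both $L([x,y]) = 0$ (as $[x,y] \in [\f q,\f q]$) and $[L(x),y]+[x,L(y)] = 0$ (as $L(x),L(y) \in \f q_Z$), so $L$ obeys the Leibniz rule. That $\f L$ is an ideal follows from Proposition \ref{prop:derivations_stabilize_these}: for $D \in \Der \f q$ and $L \in \f L$, the composite $DL$ lands in $D(\f q_Z) \subseteq \f q_Z$ while $LD$ lands in $\f q_Z$, and on $[\f q,\f q]$ both $DL$ and $LD$ vanish (using $D([\f q,\f q]) \subseteq [\f q,\f q]$), so $[D,L] \in \f L$. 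That $\ad \f q$ is an ideal is the standard identity $[D,\ad x] = \ad(Dx)$. For the trivial intersection, if $\ad x \in \f L$ then $[x,\f q] \subseteq \f q_Z \cap [\f q,\f q] = \f g_Z \cap [\f q,\f q] = 0$ by the decomposition $\f q = \f g_Z \dsum \f c \dsum [\f q,\f q]$, forcing $x \in \f q_Z$ and $\ad x = 0$.

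The heart of the argument is surjectivity, where I would construct, for a given $D \in \Der \f q$, an element $x \in \f q$ with $D - \ad x \in \f L$. Choose a regular $t \in \f h$, meaning $\alpha(t) \neq 0$ for every $\alpha \in \Phi'$ (possible as $\Phi'$ is finite and $\K$ is infinite), so that the $0$-eigenspace of $\ad t$ on $\f q$ is exactly $\widetilde{\f h} := \f g_Z \dsum \f h$. Writing $D(t) = h_0 + \sum_{\alpha \in \Phi'} c_\alpha x_\alpha$ with $h_0 \in \widetilde{\f h}$ and setting $x_0 = -\sum_{\alpha} \alpha(t)^{-1} c_\alpha x_\alpha$, the derivation $D_1 := D - \ad x_0$ satisfies $D_1(t) \in \widetilde{\f h}$. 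Applying $D_1$ to $[t,h] = 0$ (and using regularity) gives $D_1(\f h) \subseteq \widetilde{\f h}$, and then applying $D_1$ to $[h,x_\alpha] = \alpha(h) x_\alpha$ and comparing components sharpens this to $D_1(\f h) \subseteq \f g_Z$ and $D_1(x_\alpha) = d_\alpha x_\alpha$ for scalars $d_\alpha \in \K$; the regularity of $t$ and the fact that $\Phi'$ spans $\f h^*$ are what force these comparisons to collapse onto single eigenvalues.

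The step I expect to be the main obstacle is showing that the eigenvalues $d_\alpha$ are realized by a single Cartan element. Applying $D_1$ to $[x_\alpha,x_\beta] = N_{\alpha\beta} x_{\alpha+\beta}$ (with $N_{\alpha\beta} \neq 0$) yields $d_{\alpha+\beta} = d_\alpha + d_\beta$ whenever $\alpha,\beta,\alpha+\beta \in \Phi'$, and applying it to $[x_\alpha,x_{-\alpha}] = h_\alpha$ yields $d_\alpha + d_{-\alpha} = 0$ together with $D_1(h_\alpha) = 0$ (the latter because $D_1(\f h) \subseteq \f g_Z$, $h_\alpha \in \f h$, and $\f g_Z \cap \f h = 0$). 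Thus $\alpha \mapsto d_\alpha$ is additive, so prescribing $\alpha_i(h_1) = d_{\alpha_i}$ on the simple roots $\Delta \subseteq \Phi'$ determines an $h_1 \in \f h$ with $\alpha(h_1) = d_\alpha$ for every $\alpha \in \Phi'$ (the chain property of root strings propagates the equality by height). Setting $D_2 := D_1 - \ad h_1$, we get $D_2(x_\alpha) = 0$ for all $\alpha \in \Phi'$, and since $D_1(h_\alpha) = 0 = [h_1,h_\alpha]$ we also get $D_2(\f t) = 0$, hence $D_2$ kills $[\f q,\f q] = \f t \dsum \sum_{\alpha\in\Phi'}\K x_\alpha$; as $D_2$ moreover sends $\f g_Z$ and $\f c$ into $\f g_Z$, it lies in $\f L$. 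Therefore $D = \ad(x_0 + h_1) + D_2 \in \ad \f q + \f L$.

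Finally I would extract the isomorphism. The kernel of $\ad \colon \f q \to \Der \f q$ is $\f q_Z = \f g_Z$, so $\ad \f q \cong \f q / \f g_Z \cong \f q_S$ as Lie algebras; and evaluating an element of $\f L$ on $\f q = \f g_Z \dsum \f c \dsum [\f q,\f q]$ identifies $\f L$ with $\Hom_\K(\f g_Z \dsum \f c, \f g_Z)$, which is precisely its block form in Figure \ref{fig:l_blockform}. Since $\f L$ and $\ad \f q$ are ideals meeting in $0$, their mutual bracket lies in $\f L \cap \ad \f q = 0$, so $\Der \f q = \f L \oplus \ad \f q$ is a direct sum of commuting ideals, giving $\Der \f q \cong \Hom_\K(\f g_Z \dsum \f c, \f g_Z) \oplus \f q_S$ as Lie algebras.
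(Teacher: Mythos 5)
Your proposal is correct and follows essentially the same route as the paper's proof: peel off an inner derivation in two stages (first a sum of root vectors so that $\f h$ lands in $\f g_Z \dsum \f h$, then a Cartan element realizing the eigenvalues $d_\alpha$, constructed from their values on simple roots and propagated by additivity along partial sums of roots), and then verify that $\f L$ consists of derivations, is an ideal, and meets $\ad \f q$ trivially. Your regular-element trick in the first stage is a tidy shortcut — it yields the same corrector $x_0 = -\sum_\alpha \alpha(t)^{-1} a_\alpha(t)\, x_\alpha$ that the paper obtains by showing the ratios $a_\alpha(h)/\alpha(h)$ are independent of $h$ — but the overall architecture of the argument is identical.
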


We must explain what is meant by
$\Hom_{\K} \left( \f g_Z \dsum \f c, \f g_Z \right)$ as a Lie algebra,
since it is merely a space of linear maps and does not come equipped
with a Lie bracket a priori.
For vector spaces $V_1, V_2$,
we consider the space $\Hom_{\K}(V_2, V_1)$ an abelian Lie algebra.
Then, $\Hom_{\K} \left( V_1 \dsum V_2, V_1 \right)$ may be realized as
the Lie algebra semidirect sum
\[
  \Hom_{\K} \left( V_1 \dsum V_2, V_1 \right)
  = \f{gl}(V_1) \ltimes \Hom_{\K}(V_2, V_1)
\]
with the action of $\f{gl}(V_1)$ on $\Hom_{\K}(V_2, V_1)$ defined by
\[
  f \cdot g =
  f \circ g \quad \forall f \in \f{gl}(V_1), g \in \Hom_{\K}(V_2, V_1)
  \text{.}
\]

This definition is intrinsic in the sense that
if we fix bases for $V_1$ and $V_2$,
then we may identify $\Hom_{\K} \left( V_1 \dsum V_2, V_1 \right)$
with the subalgebra of $\f{gl}(V_1 \dsum V_2)$
consisting of block matrices of the form illustrated
in figure \ref{fig:l_smallform}
(compare to figure \ref{fig:l_blockform}),
and the Lie bracket defined by the action above coincides
with the standard Lie bracket on matrices, $[M,N] = MN-NM$.

\begin{figure}[ht]
  \[
    \bordermatrix{
          &V_1  &V_2  \cr
      V_1 &\ast &\ast \cr
      V_2 &0    &0
    }
  \]
  \caption{
    Embedding
    of $\Hom_{\K} \left( V_1 \dsum V_2, V_1 \right)$
    in $\f{gl}(V_1 \dsum V_2)$
  }
  \label{fig:l_smallform}
\end{figure}

\begin{proof}[Proof of proposition \ref{prop:complex_case}]
  For clarity, the proof of the theorem is organized into a progression
  of claims.
  The first three claims establish that an arbitrary derivation may be
  written as a sum of an inner derivation and a derivation mapping
  $\f g_Z \dsum \f c$ to $\f g_Z$ and $[\f q, \f q]$ to $0$.
  To this end, let $D$ be an arbitrary derivation of $\f q$.

  \begin{claim}\label{claim:d_adx}
    There is an $x \in \f q$ such that $D - \ad x$
    maps $\f c$ to $\f g_Z$, annihilates $\f t$,
    and stabilizes each root space $\K x_\alpha$.
  \end{claim}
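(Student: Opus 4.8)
The plan is to exploit the $\ad \f h$-weight grading of $\f q$. Since $\f h$ is abelian and acts diagonalizably under $\ad$, we have $\f q = \f q_0 \dsum \sum_{\alpha \in \Phi'} \K x_\alpha$ with zero-weight space $\f q_0 = \f g_Z \dsum \f h$, which is abelian. Because $\K$ is infinite of characteristic zero, I can choose a regular element $t_0 \in \f h$ so that the scalars $\alpha(t_0)$, $\alpha \in \Phi'$, are all nonzero and pairwise distinct (and distinct from $0$); avoiding the finitely many hyperplanes $\alpha = 0$ and $\alpha = \beta$ suffices. Then $s := \ad t_0$ is semisimple with one-dimensional eigenspaces $\K x_\alpha$ (eigenvalue $\alpha(t_0)$) and zero-eigenspace $\f q_0$. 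The goal is to produce the correcting element $x$ in essentially one stroke and then read off all three properties from this grading.

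First I would arrange that the corrected derivation moves $t_0$ back into the torus. Writing $D t_0 = h_0 + \sum_{\alpha \in \Phi'} c_\alpha x_\alpha$ with $h_0 \in \f q_0$, set $x = -\sum_{\alpha \in \Phi'} (c_\alpha / \alpha(t_0)) x_\alpha$ and $D_1 := D - \ad x$. Since $\ad x(t_0) = [x, t_0] = -s(x) = \sum_\alpha c_\alpha x_\alpha$, the nonzero-weight part cancels and $D_1 t_0 = h_0 \in \f q_0$. This step uses only that each $\alpha(t_0) \neq 0$.

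The crux, and what I expect to be the main obstacle, is upgrading this to the statement that $D_1$ commutes with $s$. From the general identity $[D_1, \ad y] = \ad(D_1 y)$ for a derivation $D_1$, we get $[D_1, s] = \ad(D_1 t_0)$; as $D_1 t_0 \in \f g_Z \dsum \f h$ and $\f g_Z$ is central, this equals $\ad h_1$, where $h_1$ is the $\f h$-component of $D_1 t_0$. Since $h_1$ commutes with $t_0$, $\ad h_1$ commutes with $s$, whence $[s, [s, D_1]] = -[s, \ad h_1] = 0$. Decomposing $D_1 = \sum_c (D_1)_c$ into eigencomponents of the semisimple operator $[s, -]$ on $\operatorname{End}(\f q)$, the relation $\sum_c c^2 (D_1)_c = 0$ forces $(D_1)_c = 0$ for every $c \neq 0$, so $D_1$ commutes with $s$. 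Consequently $D_1$ stabilizes each eigenspace of $s$: it stabilizes every root space $\K x_\alpha$ and maps $\f q_0$ into $\f q_0$. This is where the semisimplicity of $\ad t_0$ and the eigenvalue separation do the real work; the rest is bookkeeping on the grading.

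Finally I would extract the behaviour on $\f t$ and $\f c$. For $h \in \f h$ and $\beta \in \Phi'$, applying $D_1$ to $[h, x_\beta] = \beta(h) x_\beta$ and using $D_1 x_\beta \in \K x_\beta$ gives $[D_1 h, x_\beta] = 0$; writing $D_1 h = z + k \in \f g_Z \dsum \f h$, this says $\beta(k) = 0$ for all $\beta \in \Phi'$, and since $\Phi' \supseteq \Phi^+ \supseteq \Delta$ spans $\f h^*$ we conclude $k = 0$, i.e. $D_1(\f h) \subseteq \f g_Z$. In particular $D_1(\f c) \subseteq \f g_Z$. For $\alpha \in \Delta'$ both $\pm \alpha \in \Phi'$, so writing $D_1 x_{\pm\alpha} = d_{\pm\alpha} x_{\pm\alpha}$ we also have $D_1 h_\alpha = D_1[x_\alpha, x_{-\alpha}] = (d_\alpha + d_{-\alpha}) h_\alpha \in \f t$; together with $D_1 h_\alpha \in \f g_Z$ and $\f t \cap \f g_Z = 0$ this forces $D_1 h_\alpha = 0$, hence $D_1(\f t) = 0$. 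Thus the single element $x$ witnesses the claim.
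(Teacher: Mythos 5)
Your proof is correct, and it takes a genuinely different route from the paper's. The paper works with arbitrary pairs $h,k\in\f h$: writing $D(h)=z+h'+\sum_\gamma a_\gamma(h)x_\gamma$, it derives from $0=D([h,k])$ the identity \eqref{eq:scalar_proj}, which makes the ratio $d_\gamma=a_\gamma(h)/\gamma(h)$ a well-defined invariant independent of $h$; it then sets $x=-\sum_\gamma d_\gamma x_\gamma$ and checks all three properties of $D-\ad x$ by equating the two root-space expansions \eqref{eq:sharp} and \eqref{eq:flat} of $D'([h,x_\alpha])$ component by component, i.e.\ via \eqref{eq:first1}--\eqref{eq:last}. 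You instead fix a single regular element $t_0\in\f h$ with the values $\alpha(t_0)$, $\alpha\in\Phi'$, nonzero and pairwise distinct, correct $D$ only so that $D_1t_0$ has weight zero, and let operator-level linear algebra do the rest: the derivation identity $[D_1,\ad t_0]=\ad(D_1t_0)=\ad h_1$, with $h_1\in\f h$ the $\f h$-component of $D_1t_0$, gives $[\ad t_0,[\ad t_0,D_1]]=0$, and semisimplicity of $[\ad t_0,-]$ on $\f{gl}(\f q)$ (so that $\ker N^2=\ker N$) forces $D_1$ to commute with $\ad t_0$, hence to preserve each of its eigenspaces, which by the regularity of $t_0$ are exactly $\f g_Z\dsum\f h$ and the individual root spaces $\K x_\alpha$. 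Your correcting element is in fact the same as the paper's, since $d_\alpha=c_\alpha/\alpha(t_0)$; you simply never need to prove that $d_\gamma$ is well defined. What your route buys is brevity and a conceptual explanation of why the root spaces get stabilized; what it costs is the appeal to slightly less elementary facts---existence of a regular element (needs $\K$ infinite, automatic here) and diagonalizability of $[\ad t_0,-]$---where the paper stays entirely within bare-hands root-space manipulation. The closing bookkeeping agrees in both proofs: you use that $\Delta\subseteq\Phi'$ spans $\f h^*$ to get $D_1(\f h)\subseteq\f g_Z$, and that $\f t\cap\f g_Z=0$ with $\f t=\Span\set{[x_\alpha,x_{-\alpha}]}{\alpha\in\Delta'}$ to get $D_1(\f t)=0$, exactly parallel to the paper's deductions $\alpha(h')=0$ and $D'(\f t)\subseteq\f g_Z\cap[\f q,\f q]=0$.
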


  Let $h, k \in \f h$ be arbitrary and write
  $D(h) = z + h' + \sum_\gamma a_\gamma(h) x_\gamma$
  and $D(k) = c + k' + \sum_\gamma a_\gamma(k) x_\gamma$
  with $z,c \in \f g_Z$ and $h',k' \in \f h$
  and $a_\gamma(h), a_\gamma(k) \in \K$.
  Recall $[h,k]=0$ since $h,k \in \f h$ and consider $D([h,k])$.
  \begin{align*}
    0 &= D([h,k])
    \\
    &=[h,D(k)]-[k,D(h)]
    \\
    &= \left[ h, c + k' + \sum_\gamma a_\gamma(k) x_\gamma \right]
    - \left[ k, z + h' + \sum_\gamma a_\gamma(h) x_\gamma \right]
    \\
    &= \left[ h, \sum_\gamma a_\gamma(k) x_\gamma \right]
    - \left[ k, \sum_\gamma a_\gamma(h) x_\gamma \right]
    \\
    &= \sum_\gamma a_\gamma(k) [h,x_\gamma]
    - \sum_\gamma a_\gamma(h) [k,x_\gamma]
    \\
    &= \sum_\gamma \left( a_\gamma(k) \gamma(h)
    - a_\gamma(h) \gamma(k) \right) x_\gamma
    \text{.}
  \end{align*}
  So
  \begin{equation}\label{eq:scalar_proj}
    a_\gamma(k) \gamma(h) - a_\gamma(h) \gamma(k) = 0
    \text{ for all } \gamma \in \Phi', h, k \in \f h
    \text{.}
  \end{equation}

  Furthermore, for any pair $h,k$ for which $\gamma(h) \neq 0$ and
  $\gamma(k) \neq 0$, we have that
  \[
    \frac{a_\gamma(h)}{\gamma(h)} = \frac{a_\gamma(k)}{\gamma(k)}
    \text{.}
  \]
  This observation, along with the fact that $\gamma(h) \neq 0$
  for at least one $h \in \f h$,
  allows us to associate with each $\gamma \in \Phi'$
  the numerical invariant
  \[
    d_\gamma = \frac{a_\gamma(h)}{\gamma(h)}
  \]
  independently of our choice of $h$.

  Notice that $a_\gamma(h) - d_\gamma \gamma(h) = 0$ by definition
  when $\gamma(h) \neq 0$.
  If $\gamma(h) = 0$, the same equality still holds,
  as \eqref{eq:scalar_proj} becomes
  \[
    a_\gamma(h) \gamma(k) = 0 \text{ for all } k \in \f h\text{.}
  \]
  Since at least one $k \in \f h$ satisfies $\gamma(k) \neq 0$ we have
  $a_\gamma(h) = 0$ in case $\gamma(h) = 0$, giving
  \begin{equation}\label{eq:d_gamma}
    a_\gamma(h) - d_\gamma \gamma(h) = 0 \text{ for all } h \in \f h
    \text{.}
  \end{equation}

  Now, set $x = \sum_\gamma - d_\gamma x_\gamma$.
  Write $D' = D - \ad x$.
  We will show that $D'$ maps  $\f c$ to $\f g_Z$, annihilates $\f t$,
  and stabilizes each root space $\K x_\alpha$.

  We first show that $D'$ maps $\f h$ to $\f g_Z \dsum \f h$.
  Let $h \in \f h$ be arbitrary and again write
  $D(h) = z + h' + \sum_\gamma a_\gamma(h) x_\gamma$.
  We have that
  \begin{align*}
    D'(h) &= D(h) - \ad x (h)
    \\
    &= z + h' + \sum_\gamma a_\gamma(h) x_\gamma
    -  \sum_\gamma - d_\gamma  \ad  x_\gamma  (h)
    \\
    &= z + h' + \sum_\gamma a_\gamma(h) x_\gamma
    - \sum_\gamma d_\gamma  \ad h (x_\gamma)
    \\
    &= z + h' + \sum_\gamma a_\gamma(h) x_\gamma
    - \sum_\gamma d_\gamma \gamma(h) x_\gamma
    \\
    &= z + h' + \sum_\gamma \underbrace{
      \left( a_\gamma(h) -  d_\gamma  \gamma(h) \right)
    }_{0 \text{ by \eqref{eq:d_gamma}}} x_\gamma
    \\
    &= z + h'
  \end{align*}
  affirming the assertion.

  Having established that $D'$ maps $\f h$ into $\f g_Z \dsum \f h$,
  we have left to show that $D'$ annihilates $\f t$ and stabilizes each
  $\K x_\alpha$.
  Let $h \in \f h$ and $\alpha \in \Phi'$ be arbitrary,
  and write $D' (h) = z + h'$ and $D' (x_\alpha) = c + k +
  \sum_\gamma b_\gamma x_\gamma$ with $z,c \in \f g_Z$ and
  $h', k \in \f h$ and $b_\gamma \in \K$.
  Consider $D' \left( [h,x_\alpha] \right)$.
  On one hand,
  \begin{align*}
    D' \left( [h,x_\alpha] \right)
    &= D' \left( \alpha(h) x_\alpha \right)
    \\
    &= \alpha(h)D'(x_\alpha)
    \\
    &= \alpha(h)c + \alpha(h)k
    + \sum_\gamma \alpha(h) b_\gamma x_\gamma
    \text{.}
    \stepcounter{equation}\tag{\theequation}\label{eq:sharp}
  \end{align*}
  On the other hand,
  \begin{align*}
    D' \left( [h,x_\alpha] \right)
    &= [D'(h),x_\alpha] + [h,D'(x_\alpha)]
    \\
    &= [z + h',x_\alpha] +
    \left[ h, c + k + \sum_\gamma b_\gamma x_\gamma \right]
    \\
    &= [h',x_\alpha] + \sum_\gamma b_\gamma[h,x_\gamma]
    \\
    &= \alpha(h')x_\alpha + \sum_\gamma \gamma(h) b_\gamma x_\gamma
    \\
    &= \left( \alpha(h') + \alpha(h)b_\alpha \right)x_\alpha
    + \sum_{\gamma \neq \alpha} \gamma(h) b_\gamma x_\gamma
    \text{.}
    \stepcounter{equation}\tag{\theequation}\label{eq:flat}
  \end{align*}
  By equating \eqref{eq:sharp} and \eqref{eq:flat} and by direct sum
  decomposition of $\f q$ we obtain
  \begin{align}
    \alpha(h)c &= 0,\label{eq:first1}\\
    \alpha(h)k &= 0,\label{eq:first2}\\
    \alpha(h) b_\gamma &= \gamma(h) b_\gamma
    \quad\text{for } \gamma \neq \alpha \text{, and}
    \label{eq:middle}\\
    \alpha(h) b_\alpha &= \alpha(h') + \alpha(h) b_\alpha
    \text{.}
    \label{eq:last}
  \end{align}

  Since $h$ is arbitrary, \eqref{eq:first1} and \eqref{eq:first2}
  give $c = 0$ and $k = 0$ respectively.
  Second, \eqref{eq:middle} give us
  $b_\gamma(\gamma - \alpha) (h) =0$
  for all $\gamma \neq \alpha$.
  If any one $b_\gamma \neq 0$, then we would have $\gamma = \alpha$,
  a contradiction, so each $b_\gamma = 0$,
  whence $D'$ stabilizes each root space.

  Next, \eqref{eq:last} gives us $0=\alpha(h')$.
  Since $\alpha$ is arbitrary in $\Phi'$ and $\Phi'$ contains a basis of
  $\f h^*$, $h' = 0$, so $D'(\f h) \subseteq \f g_Z$.
  Since derivations in general stabilize $[\f q,\f q]$,
  $D'(\f t) \subseteq \f g_Z \cap [\f q,\f q] = 0$,
  so $D'$ annihilates $\f t$.
  The claim is verified.

  \begin{claim}\label{claim:dadxadh}
    There is an $h \in \f h$ whereby $D - \ad x - \ad h$
    annihilates $[\f q,\f q]$.
  \end{claim}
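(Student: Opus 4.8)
The plan is to pin down the action of $D' = D - \ad x$ on the root spaces as scalar multiplication and then to realize those scalars as the values of a single linear functional, i.e. as $\alpha(h)$ for some $h \in \f h$. By Claim \ref{claim:d_adx}, $D'$ stabilizes each $\K x_\alpha$, so I may write $D'(x_\alpha) = c_\alpha x_\alpha$ for scalars $c_\alpha \in \K$, and $D'$ annihilates $\f t$. For any $h \in \f h$ the inner derivation $\ad h$ annihilates $\f t$ (as $\f h$ is abelian and $\f t \subseteq \f h$) and acts on $x_\alpha$ by $\ad h(x_\alpha) = \alpha(h) x_\alpha$. Hence $D'' = D' - \ad h$ automatically annihilates $\f t$ and satisfies $D''(x_\alpha) = (c_\alpha - \alpha(h)) x_\alpha$. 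Since $[\f q, \f q] = \f t \dsum \sum_{\alpha \in \Phi'} \K x_\alpha$, the claim reduces to producing an $h \in \f h$ with $\alpha(h) = c_\alpha$ for every $\alpha \in \Phi'$.

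The key is to extract enough linear relations among the $c_\alpha$ from the derivation property. First, whenever $\alpha, \beta, \alpha + \beta \in \Phi'$, applying $D'$ to $[x_\alpha, x_\beta] = N_{\alpha,\beta}\, x_{\alpha+\beta}$ (with $N_{\alpha,\beta} \neq 0$) and using the Leibniz rule yields $c_{\alpha+\beta} = c_\alpha + c_\beta$; that is, $\alpha \mapsto c_\alpha$ is additive along root sums inside $\Phi'$. Second, for $\alpha \in \Phi \cap \Span \Delta'$ the coroot $[x_\alpha, x_{-\alpha}]$ lies in $\f h \cap [\f q,\f q] = \f t$, so applying $D'$ to $[x_\alpha, x_{-\alpha}]$ and invoking $D'(\f t) = 0$ forces $c_\alpha + c_{-\alpha} = 0$.

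With these relations in hand I would define $h \in \f h$ by prescribing $\delta(h) = c_\delta$ for each simple root $\delta \in \Delta$; this determines $h$ uniquely because $\Delta$ is a basis of $\f h^*$. For a positive root $\alpha$, the standard fact that $\alpha$ can be built up as a sum of simple roots passing only through positive roots, combined with additivity, gives $c_\alpha = \alpha(h)$ by induction on the height of $\alpha$. For a negative root $\alpha \in \Phi'$ (necessarily lying in $\Phi \cap \Span \Delta'$), the relation $c_\alpha = -c_{-\alpha} = -(-\alpha)(h) = \alpha(h)$ from the previous paragraph finishes the verification. Thus $\alpha(h) = c_\alpha$ for all $\alpha \in \Phi'$, so $D'' = D - \ad x - \ad h$ annihilates every $x_\alpha$ and all of $\f t$, hence annihilates $[\f q,\f q]$, proving the claim.

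I expect the main obstacle to be the second relation, $c_\alpha + c_{-\alpha} = 0$ for the Levi roots $\alpha \in \Phi \cap \Span \Delta'$: additivity alone only controls $c$ on the positive roots, and without this symmetry condition there is no reason the prescribed values $c_\delta$ on the simple roots would reproduce the $c_\alpha$ on the negative roots sitting inside the Levi factor. The resolution hinges on the observation that $[x_\alpha, x_{-\alpha}] \in \f t$ precisely for these roots, together with the fact that $D'$ kills $\f t$---which is exactly the information that Claim \ref{claim:d_adx} was engineered to supply.
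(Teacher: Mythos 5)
Your proof is correct and follows essentially the same route as the paper: scalars $c_\alpha$ from the stabilized root spaces, the value $c_\gamma = \gamma(h)$ forced on positive roots by building $\gamma$ up from simple roots via the Leibniz rule, and the relation $c_\gamma + c_{-\gamma} = 0$ for Levi roots extracted from $[x_\gamma, x_{-\gamma}] \in \f t$ and $D'(\f t) = 0$. The only cosmetic differences are that you define $h$ directly by prescribing its values against the basis $\Delta$ of $\f h^*$ (with induction on height), whereas the paper packages the same data as a functional $\tilde{c} \in \f h^{**}$ and invokes the canonical isomorphism $\f h^{**} \cong \f h$ together with a single iterated-bracket computation.
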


  We have the $D' = D - \ad x$ maps $\f c$ to $\f g_Z$,
  annihilate $\f t$, and stabilize each root space $\K x_{\alpha}$.
  For each $\gamma \in \Phi'$ write
  \[
    D'(x_\gamma) = c_\gamma x_\gamma
  \]
  with $c_\gamma \in \K$.
  Taking each $\alpha \in \Delta$, the scalars $c_\alpha$ define a
  linear functional
  \[
    \tilde{c} : \f h^* \to \K.
  \]
  We first wish to show that for each $\gamma \in \Phi'$,
  $c_\gamma = \tilde{c}(\gamma)$.

  We begin with $\gamma \in \Phi' \cap \Phi^+$.
  We choose the expression $\gamma = \alpha_1 + ... + \alpha_k$
  with each $\alpha_i \in \Delta$ so that each sequential partial sum
  $\alpha_1 + ... + \alpha_i \in \Phi'$
  \cite[\S 10.2]{humphreys1972introduction}.
  Then
  \[
    ax_\gamma = \left[ \cdots
    \left[ \left[ x_{\alpha_1},
    x_{\alpha_2} \right],
    x_{\alpha_3} \right],
    \cdots,
    x_{\alpha_k} \right]
  \]
  for some $ 0 \neq a \in \K$.
  Apply $D'$ to both sides.
  By Leibniz rule, we have
  \begin{align*}
    c_\gamma a x_\gamma
    &= D' \left[
      \cdots \left[
        \left[
          x_{\alpha_1}, x_{\alpha_2}
        \right], x_{\alpha_3}
      \right], \cdots, x_{\alpha_k}
    \right]
    \\
    &= \left[
      \cdots \left[
        \left[
          D'(x_{\alpha_1}), x_{\alpha_2}
        \right],
        x_{\alpha_3}
      \right], \cdots , x_{\alpha_k}
    \right]
    \\
    &\quad+ \left[
      \cdots \left[
        \left[
          x_{\alpha_1}, D'(x_{\alpha_2})
        \right],
        x_{\alpha_3}
      \right], \cdots , x_{\alpha_k}
    \right]
    \\
    &\quad+ \left[
      \cdots \left[
        \left[
          x_{\alpha_1}, x_{\alpha_2}
        \right],
        D'(x_{\alpha_3})
      \right], \cdots , x_{\alpha_k}
    \right]
    \\
    &\quad+ ... + \left[
      \cdots \left[
        \left[
          x_{\alpha_1}, x_{\alpha_2}
        \right], x_{\alpha_3}
      \right], \cdots , D'(x_{\alpha_k})
    \right]
    \\
    &= c_{\alpha_1} \left[
      \cdots \left[
        \left[
          x_{\alpha_1}, x_{\alpha_2}
        \right], x_{\alpha_3}
      \right], \cdots , x_{\alpha_k}
    \right]
    \\
    &\quad+ c_{\alpha_2} \left[
      \cdots \left[
        \left[
          x_{\alpha_1}, x_{\alpha_2}
        \right], x_{\alpha_3}
      \right], \cdots , x_{\alpha_k}
    \right]
    \\
    &\quad+ c_{\alpha_3} \left[
      \cdots \left[
        \left[
          x_{\alpha_1}, x_{\alpha_2}
        \right], x_{\alpha_3}
      \right], \cdots , x_{\alpha_k}
    \right]
    \\
    &\quad+ ... + c_{\alpha_k} \left[
      \cdots \left[
        \left[
          x_{\alpha_1}, x_{\alpha_2}
        \right], x_{\alpha_3}
      \right], \cdots , x_{\alpha_k}
    \right]
    \\
    &= (c_{\alpha_1} + ... + c_{\alpha_k}) a x_\gamma
    \\
    &= \tilde{c}(\alpha_1 + ... + \alpha_k)ax_\gamma
    \\
    &= \tilde{c}(\gamma)ax_\gamma
  \end{align*}
  whereby $c_\gamma =  \tilde{c}(\gamma)$
  for all $\gamma \in \Phi' \cap \Phi^+$.

  Next let $\gamma \in \Phi' \cap \Phi^-$.
  Consider $[x_\gamma, x_{-\gamma}] \in \f t$, and apply $D'$.
  \begin{align*}
    0
    &= D'([x_\gamma, x_{-\gamma}])
    \\
    &= [D'(x_\gamma),x_{-\gamma}]+[x_\gamma,D'(x_{-\gamma})]
    \\
    &= c_\gamma[x_\gamma,x_{-\gamma}]
    + c_{-\gamma}[x_\gamma,x_{-\gamma}]
    \\
    &= (c_\gamma + c_{-\gamma})[x_\gamma,x_{-\gamma}].
  \end{align*}
  Since $[x_\gamma, x_{-\gamma}] \neq 0$,
  we have $c_\gamma + c_{-\gamma} = 0$ so
  \begin{align*}
    c_\gamma &= -c_{-\gamma} &&
    \\
    &= -\tilde{c}(-\gamma) &&
      \text{since $-\gamma \in \Phi' \cap \Phi^+$}
    \\
    &= \tilde{c}(\gamma) &&
  \end{align*}
  as desired.

  Next, we use the canonical isomorphism $\Psi: \f h^{**} \to \f h$
  \cite[Ch. VII, \S 4]{maclane1967algebra} to produce
  $\Psi(\tilde{c}) = h \in \f h$.
  Notice that for each $\gamma \in \Phi'$ we have the identity
  \begin{equation*}
    \tilde{c}(\gamma) - \gamma ( h ) = 0
  \end{equation*}
  by the definition of the canonical isomorphism.

  The claim is that $D' - \ad h$ annihilates $[\f q,\f q]$.
  Since $[h,\f t]=0$, we need only check that $D' - \ad h$ maps each
  $x_\gamma$ to $0$, for $\gamma\in\Phi'\cap -\Phi'$.
  \[
    (D' - \ad h)(x_\gamma)
    = \tilde{c}(\gamma)x_\gamma - \gamma(h) x_\gamma
    = (\tilde{c}(\gamma) - \gamma(h)) x_\gamma
    = 0
  \]
  verifying the claim.

  \begin{claim}\label{claim:l_adp}
    $D = L + \ad p$ for some $p \in \f q$ and some derivation $L$ which
    maps $\f g_Z \dsum \f c$ to $\f g_Z$ and maps $[\f q, \f q]$ to $0$.
  \end{claim}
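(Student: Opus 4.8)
The plan is to assemble the two preceding claims rather than to compute anything new. Claim \ref{claim:d_adx} supplies an element $x \in \f q$ for which $D' = D - \ad x$ carries $\f c$ into $\f g_Z$, kills $\f t$, and stabilizes each root space $\K x_\alpha$; Claim \ref{claim:dadxadh} then supplies an $h \in \f h$ for which $D' - \ad h = D - \ad(x+h)$ annihilates all of $[\f q, \f q]$. So I would simply set $p = x + h \in \f q$ and $L = D - \ad p$, and verify that this $L$ has the three properties demanded by the statement: it is a derivation, it maps $[\f q, \f q]$ to $0$, and it maps $\f g_Z \dsum \f c$ into $\f g_Z$.

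The first two properties are immediate. Since $D$ is a derivation and $\ad p$ is an inner derivation, their difference $L$ is again a derivation. And by linearity of $\ad$ we have $\ad p = \ad x + \ad h$, so $L$ coincides with the map $D - \ad x - \ad h$ of Claim \ref{claim:dadxadh}, which annihilates $[\f q, \f q]$.

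For the third property I would split $\f g_Z \dsum \f c$ into its two summands and argue against the fixed decomposition $\f q = \f g_Z \dsum \f c \dsum [\f q, \f q]$. On the center $\f g_Z$, which is the center $\f q_Z$ of $\f q$ by the earlier proposition identifying that center, Proposition \ref{prop:derivations_stabilize_these} guarantees that any derivation stabilizes $\f q_Z$, so $L(\f g_Z) \subseteq \f g_Z$ at once. On $\f c$, the key observation is that $\f c \subseteq \f h$, since each generator $[x_\alpha, x_{-\alpha}]$ lies in the Cartan subalgebra; hence $\ad h$ annihilates $\f c$ because $\f h$ is abelian, and consequently $L(\f c) = (D' - \ad h)(\f c) = D'(\f c) \subseteq \f g_Z$ by Claim \ref{claim:d_adx}.

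The argument has no genuine analytic obstacle, as all the real work was carried out in Claims \ref{claim:d_adx} and \ref{claim:dadxadh}; Claim \ref{claim:l_adp} is a matter of bookkeeping. The one point meriting care is confirming that subtracting $\ad h$ in the passage from $D'$ to $L$ does not disturb the property $D'(\f c) \subseteq \f g_Z$ already established, and it is precisely the commutativity of $\f h$ (together with $\f c \subseteq \f h$) that keeps the two properties ``maps $\f c$ into $\f g_Z$'' and ``kills $[\f q, \f q]$'' simultaneously intact.
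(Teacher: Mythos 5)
Your proof is correct and follows essentially the same route as the paper's: set $p = x + h$, $L = D - \ad p$, get that $L$ is a derivation killing $[\f q,\f q]$ from Claim \ref{claim:dadxadh}, handle $\f g_Z$ via stability of the center (Proposition \ref{prop:derivations_stabilize_these}), and reduce $L|_{\f c}$ to $D'|_{\f c}$ using $[h,\f c]=0$. Your explicit justification of that last vanishing (namely $\f c \subseteq \f h$ and $\f h$ abelian) is a detail the paper leaves implicit, but it is the same argument.
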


  Set $p = x + h$ as above and set $L = D - \ad p = D' - \ad h$.
  Then $D = L + \ad p$ as desired.
  We note that since $L$ is the difference of two derivations,
  $L$ is itself a derivation.
  We know from claim \ref{claim:dadxadh} that $L$ annihilates
  $[\f q, \f q]$.
  We must check that $L$ maps $\f g_Z \dsum \f c$ to $\f g_Z$.

  We have already seen that $\f g_Z$ is the center of $\f q$, and more,
  that a derivation of $\f q$ must stabilize the center of $\f q$.
  What is left to verify claim \ref{claim:l_adp} is to check that $L$
  maps $\f c$ into $\f g_Z$.
  Let $c \in \f c$ be arbitrary.
  We have
  \begin{align*}
    L(c) &= \left( D' - \ad h \right)(c)
    \\
    &= D'(c) - [h,c]
    \\
    &= \underbrace{D'(c) \in \f g_Z
      }_{\text{ by claim \ref{claim:d_adx}}}
  \end{align*}
  verifying claim \ref{claim:l_adp}.

  Since $D$ was arbitrary, we now have that $\Der \f q$ is spanned by
  $\ad \f q$ and the subset of $\Der \f q$ consisting of derivations
  that map $\f g_Z \dsum \f c$ to $\f g_Z$ and $[\f q, \f q]$ to $0$.
  The next three claims establish facts about the relationship between
  these two sets.

  \begin{claim}\label{claim:l_subset_derq}
    $\f L \subseteq \Der \f q$.
  \end{claim}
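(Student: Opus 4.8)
The plan is to verify directly that every $L \in \f L$ satisfies the Leibniz rule, so that $L$ is in fact a derivation. The two defining properties of $\f L$—that $L$ maps $\f q$ into $\f q_Z$ and that $L$ annihilates $[\f q, \f q]$—are precisely what is needed to force both sides of the Leibniz identity to vanish identically.

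First I would fix $L \in \f L$ and arbitrary $u, v \in \f q$, and recall that we have already established $\f q_Z = \f g_Z$, so that the image of $L$ lies in the center of $\f q$. Consequently $L(u)$ and $L(v)$ are central, which gives $[L(u), v] = 0$ and $[u, L(v)] = 0$; hence the right-hand side $[L(u), v] + [u, L(v)]$ of the Leibniz rule is $0$.

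Next I would observe that the left-hand side vanishes as well: since $[u,v] \in [\f q, \f q]$ and $L$ sends $[\f q, \f q]$ to $0$, we have $L([u,v]) = 0$. Comparing the two computations yields $L([u,v]) = [L(u), v] + [u, L(v)]$, so $L$ is a derivation, establishing $\f L \subseteq \Der \f q$.

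There is really no obstacle here; the entire content of the claim is captured by the two structural facts that $\f q_Z = \f g_Z$ and that the center centralizes all of $\f q$. The only point requiring any care is to invoke the earlier identification $\f q_Z = \f g_Z$ so that the phrase ``maps into $\f q_Z$'' may be read as ``maps into the center,'' after which the verification is immediate. (If one instead works from the explicit description of $\f L$ as those maps sending $\f g_Z \dsum \f c$ into $\f g_Z$ and $[\f q,\f q]$ to $0$, the two descriptions agree in view of the decomposition $\f q = \f g_Z \dsum \f c \dsum [\f q,\f q]$, and the same computation applies verbatim.)
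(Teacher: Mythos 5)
Your proof is correct and follows exactly the paper's argument: both sides of the Leibniz identity vanish, the right-hand side because $L$ maps into the center $\f q_Z = \f g_Z$, and the left-hand side because $L$ annihilates $[\f q, \f q]$. Nothing further is needed.
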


  $\f L$ is defined as the set of $\K$-linear endomorphisms of $\f q$
  mapping into the center of $\f q$ and mapping $[\f q, \f q]$ to $0$.
  We will show that any such linear map is indeed a derivation of
  $\f q$.
  Suppose $L: \f q \to \f q$ is any $\K$-linear map satisfying
  $L(\f q) \subseteq \f g_Z$ and $L([\f q,\f q]) = 0$.
  Then, for any $x,y \in \f q$ we have
  \[
    [L(x),y] + [x,L(y)] = 0 = L([x,y])
  \]
  so $L$ is a derivation.

  \begin{claim}\label{claim:l_ideal_derq}
    $\f L$ is an ideal of $\Der \f q$.
  \end{claim}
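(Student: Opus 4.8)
The plan is to verify directly that the commutator $[D,L] = D \circ L - L \circ D$ lies in $\f L$ for every derivation $D \in \Der \f q$ and every $L \in \f L$. Since $\f L \subseteq \Der \f q$ is already established in claim \ref{claim:l_subset_derq}, this is exactly what it means for $\f L$ to be an ideal. The whole argument rests on the two stabilization properties recorded in proposition \ref{prop:derivations_stabilize_these}: an arbitrary derivation of $\f q$ stabilizes both the center $\f q_Z = \f g_Z$ and the derived algebra $[\f q, \f q]$. These are precisely the two subspaces appearing in the definition of $\f L$, which is what makes the computation close up.

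First I would check the image condition, namely that $[D,L]$ maps $\f q$ into $\f g_Z$. Given any $x \in \f q$, the element $L(x)$ lies in $\f g_Z$ by the definition of $\f L$, and since $D$ stabilizes $\f g_Z$ we get $D(L(x)) \in \f g_Z$. On the other side, $D(x) \in \f q$, so $L(D(x)) \in \f g_Z$ again by the definition of $\f L$. Subtracting, $(D \circ L - L \circ D)(x) \in \f g_Z$, as required.

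Next I would check the annihilation condition, namely that $[D,L]$ sends $[\f q, \f q]$ to $0$. For $w \in [\f q, \f q]$ we have $L(w) = 0$ by the definition of $\f L$, hence $D(L(w)) = 0$. Since $D$ stabilizes $[\f q, \f q]$, we have $D(w) \in [\f q, \f q]$, so $L(D(w)) = 0$ as well. Thus $(D \circ L - L \circ D)(w) = 0$. Combining the two verifications gives $[D,L] \in \f L$, completing the proof.

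There is no genuine obstacle here; the one point requiring care is to invoke proposition \ref{prop:derivations_stabilize_these} in full---the stabilization of $\f g_Z$ for the image condition and the stabilization of $[\f q, \f q]$ for the annihilation condition---rather than the weaker statement that inner derivations stabilize arbitrary ideals (proposition \ref{prop:inner_derivations}), since the derivation $D$ against which we test $\f L$ may be outer.
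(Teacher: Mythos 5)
Your proof is correct and takes essentially the same route as the paper's: both verify $[D,L] \in \f L$ by using proposition \ref{prop:derivations_stabilize_these} to conclude $D(L(x)) \in \f g_Z$ and $D(L(y)) = 0$, $L(D(y)) = 0$ on $[\f q,\f q]$. The only (trivial) point the paper includes that you omit is that $\f L$ is closed under linear combinations, which is strictly speaking part of showing $\f L$ is an ideal rather than merely a subset closed under bracketing with $\Der \f q$.
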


  First we note that $\f L$ is linearly closed:
  Indeed, if $L_1, L_2 \in \f L$, then $L_1 + k L_2$
  maps into $\f g_Z$ and maps $[\f q, \f q]$ to $0$.
  Second, let $L \in \f L$ and $D \in \Der \f q$.
  We must show $[D,L] \in \f L$.
  Recall (from proposition \ref{prop:derivations_stabilize_these})
  that $D(\f g_Z) \subseteq \f g_Z$
  and $D([\f q,\f q]) \subseteq [\f q,\f q]$.
  Let $x \in \f q$. Consider $[D,L](x)$.
  \[
    [D,L](x)
    = D(\underbrace{L(x)}_{\in \f g_Z})
    - L(\underbrace{D(x)}_{\in \f q})
    \in \f g_Z
  \]
  so $[D,L]$ maps $\f q$ into $\f g_Z$.
  Now, let $y \in [\f q,\f q]$ and consider $[D,L](y)$.
  \[
    [D,L](y)
    = D(\underbrace{L(y)}_{0})
    - L(\underbrace{D(y)}_{\in [\f q,\f q]})
    = 0
  \]
  so $[D,L] \in \f L$, verifying the claim.

  \begin{claim}\label{claim:l_disjoint_adq}
    $\f L$ and $\ad \f q$ intersect trivially.
  \end{claim}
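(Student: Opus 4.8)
The plan is to show that the zero map is the only transformation belonging to both $\f L$ and $\ad \f q$. Suppose $D \in \f L \cap \ad \f q$ and write $D = \ad x$ for some $x \in \f q$. Membership in $\f L$ means precisely that $\ad x$ maps all of $\f q$ into the center $\f g_Z = \f q_Z$. On the other hand, proposition \ref{prop:inner_derivations} tells us that every inner derivation maps $\f q$ into the derived algebra $[\f q, \f q]$. Taken together, these two facts force $\ad x$ to map $\f q$ into the intersection $\f g_Z \cap [\f q, \f q]$.

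The crux is then to observe that this intersection is trivial. This is immediate from the vector space direct-sum decomposition
\[
  \f q = \f g_Z \dsum \f c \dsum [\f q, \f q]
\]
established earlier in the section, which exhibits $\f g_Z$ and $[\f q, \f q]$ as distinct summands of a direct sum, so that $\f g_Z \cap [\f q, \f q] = 0$. Hence $\ad x$ annihilates every element of $\f q$, that is, $D = \ad x = 0$, and we conclude $\f L \cap \ad \f q = 0$.

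I do not anticipate any genuine obstacle in this claim: unlike the earlier claims, no decomposition of an arbitrary derivation is required, and the argument reduces to combining the defining property of $\f L$ with the general behavior of inner derivations. The single point that merits a moment's attention is that $\f q_Z$ coincides with $\f g_Z$, so that membership in $\f L$ genuinely confines the image to the first summand; but this identification was already secured in the preliminaries, so it may be invoked directly.
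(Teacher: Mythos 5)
Your proof is correct and follows essentially the same route as the paper: both arguments note that membership in $\f L$ forces the image into $\f g_Z$, membership in $\ad \f q$ forces the image into $[\f q,\f q]$ (via proposition~\ref{prop:inner_derivations}), and the decomposition $\f q = \f g_Z \dsum \f c \dsum [\f q,\f q]$ makes the intersection $\f g_Z \cap [\f q,\f q]$ trivial. Your write-up merely makes explicit the citations and the direct-sum justification that the paper leaves implicit.
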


  Suppose $D \in \f L \cap \ad \f q$.
  Since $D \in \f L$, $D$ maps $\f q$ into $\f g_Z$.
  Since $D \in \ad \f q$, $D$ maps $\f q$ into $[\f q, \f q]$.
  So, $D$ maps $\f q$ into $\f g_Z \cap [\f q, \f q] = 0$,
  whereby $D = 0$, completing the proof of the theorem.
\end{proof}

As a simple application, we will use proposition \ref{prop:complex_case}
to derive a formula for the dimension of $\Der \f q$ in terms of $\f g$
and $\f q$ and their invariants.

\begin{proposition}\label{cor:dim}
  For $\f q = \K^n \oplus \f q_S$ a parabolic subalgebra
  of a reductive Lie algebra $\f g = \K^n \oplus \f g_S$
  over $\K$, with notation as above,
  the dimension of $\Der \f q$ is given by
  \[
    \dim \left( \Der \f q \right) =
    \left( n + |\Delta| - |\Delta'| \right) n + \dim \f q_S.
  \]
\end{proposition}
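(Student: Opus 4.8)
The plan is to read the dimension directly off the Lie algebra isomorphism
\[
  \Der \f q \cong \Hom_{\K}\left( \f g_Z \dsum \f c, \f g_Z \right) \oplus \f q_S
\]
established in proposition \ref{prop:complex_case}. Since dimension is invariant under isomorphism and additive over direct sums, the entire computation reduces to evaluating $\dim \Hom_{\K}(\f g_Z \dsum \f c, \f g_Z)$ and adding $\dim \f q_S$. There is no structural work left to do; what remains is purely a count of basis vectors.

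First I would record the two elementary facts. For finite-dimensional vector spaces $V$ and $W$ one has $\dim \Hom_{\K}(V,W) = \dim V \cdot \dim W$; here $W = \f g_Z = \K^n$ so $\dim \f g_Z = n$, and $V = \f g_Z \dsum \f c$ so $\dim V = n + \dim \f c$. It therefore remains only to pin down $\dim \f c$. From the claim preceding proposition \ref{prop:complex_case} we have $\f h = \f c \dsum \f t$ with $\f c = \Span \set{[x_\alpha, x_{-\alpha}]}{\alpha \in \Delta \setminus \Delta'}$ and $\f t = \Span \set{[x_\alpha, x_{-\alpha}]}{\alpha \in \Delta'}$, while the full coroot family $\set{[x_\alpha, x_{-\alpha}]}{\alpha \in \Delta}$ is a basis of $\f h$. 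A subset of a basis is linearly independent, so the displayed spanning set for $\f c$ is itself a basis, giving $\dim \f c = |\Delta \setminus \Delta'| = |\Delta| - |\Delta'|$.

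Assembling the pieces yields $\dim \Hom_{\K}(\f g_Z \dsum \f c, \f g_Z) = \left( n + |\Delta| - |\Delta'| \right) n$, and hence
\[
  \dim \left( \Der \f q \right) = \left( n + |\Delta| - |\Delta'| \right) n + \dim \f q_S,
\]
as claimed. As a consistency check one could instead work from the decomposition $\Der \f q = \f L \oplus \ad \f q$: the map $\ad$ has kernel exactly the center $\f q_Z = \f g_Z$, so $\dim \ad \f q = \dim \f q - n = \dim \f q_S$, while an element of $\f L$ is freely determined by its restriction to $\f g_Z \dsum \f c$ (its value on $[\f q, \f q]$ being forced to $0$), so $\dim \f L = \left( n + |\Delta| - |\Delta'| \right) n$. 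The only point meriting any care is the linear independence of the coroots indexed by $\Delta \setminus \Delta'$, and this is immediate once we invoke that the full coroot family forms a basis of $\f h$; no genuine obstacle arises.
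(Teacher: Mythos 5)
Your proposal is correct and takes essentially the same approach as the paper: both read the dimension directly off the isomorphism $\Der \f q \cong \Hom_{\K}\left( \f g_Z \dsum \f c, \f g_Z \right) \oplus \ad \f q$ of proposition \ref{prop:complex_case}, using $\dim \f c = |\Delta| - |\Delta'|$ and $\dim \ad \f q = \dim \f q_S$. You simply supply more justification than the paper does (the coroot-basis argument for $\dim \f c$ and the kernel-of-$\ad$ computation), which is harmless.
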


\begin{proof}
  The corollary follows from the isomorphism
  $\Der \f q \cong
  \Hom_\K \left( \f g_Z \dsum \f c , \f g_Z \right)
  \oplus \ad \f q$.
  We have $\dim \f c = |\Delta| - |\Delta'|$,
  and $\dim \ad \f q = \dim \f q_S$,
  since $\ad \f q \cong \ad \f q_S \cong \f q_S$.
\end{proof}

\section{The real case}\label{sec:real_case}

In this section, $\f g = \f g_Z \oplus \f g_S$ denotes a reductive Lie
algebra over $\R$ with center $\f g_Z$ and maximal semisimple ideal
$\f g_S$.
$\f q = \f g_Z \oplus \f q_S$ is a parabolic subalgebra of $\f g$,
where $\f q_S = \f q \cap \f g_S$ is a parabolic subalgebra of $\f g_S$.

We begin by proving the central proposition in the context of real Lie
algebras.
The proof will rely heavily on the complexification $\hat{\f g}$ of
$\f g$, to which we will apply proposition \ref{prop:complex_case}.
Afterwards, we consider the restricted root space decomposition of
$\f g$ and expand upon the central theorem.

\begin{theorem}\label{prop:real_case}
  For a parabolic subalgebra $\f q=\f g_Z\oplus\f q_S$ of a reductive
  Lie algebra $\f g=\f g_Z\oplus\f g_S$ over $\R$, the derivation
  algebra $\Der \f q$ decomposes as the sum of ideals
  \[
    \Der \f q = \f L \oplus \ad \f q
  \]
  where $\f L$ consists of all $\R$-linear transformations on $\f q$
  mapping into $\f q_Z$ and mapping $[\f q, \f q]$ to $0$.
\end{theorem}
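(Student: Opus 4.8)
The plan is to deduce the real decomposition from the complex one by complexifying and then descending along the conjugation of $\hat{\f q}$ over $\f q$. Since $\hat{\f g}$ is reductive over $\C$ and $\hat{\f q}$ is a parabolic subalgebra of it, Proposition \ref{prop:complex_case} applies and yields $\Der \hat{\f q} = \hat{\f L} \oplus \ad \hat{\f q}$, where $\hat{\f L}$ is the ideal of $\C$-linear maps carrying $\hat{\f q}$ into $\hat{\f q}_Z = \widehat{\f q_Z}$ (Proposition \ref{prop:center_of_complexification}) and killing $[\hat{\f q}, \hat{\f q}] = \widehat{[\f q, \f q]}$. The central device is the conjugation $\sigma(x + \im y) = x - \im y$, a conjugate-$\C$-linear automorphism of $\hat{\f q}$ whose fixed-point set is exactly $\f q$; one checks directly that $\sigma$ preserves both $\hat{\f q}_Z$ and $[\hat{\f q}, \hat{\f q}]$.

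Given an arbitrary $D \in \Der \f q$, I would first complexify it to $\hat D \in \Der \hat{\f q}$ and note that $\hat D \sigma = \sigma \hat D$, since $\hat D(x + \im y) = D(x) + \im D(y)$. Writing $\hat D = \hat L + \ad \hat z$ with $\hat L \in \hat{\f L}$ and $\hat z \in \hat{\f q}$, I conjugate by $\sigma$. Because $\sigma$ is a conjugate-linear automorphism, $\sigma \hat L \sigma^{-1}$ is again $\C$-linear and lies in $\hat{\f L}$ (as $\sigma$ stabilizes $\hat{\f q}_Z$ and $[\hat{\f q}, \hat{\f q}]$), while $\sigma (\ad \hat z) \sigma^{-1} = \ad(\sigma \hat z)$. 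Since $\hat D$ is $\sigma$-invariant, this gives
\[
  \hat L + \ad \hat z = \sigma \hat L \sigma^{-1} + \ad(\sigma \hat z),
\]
and the uniqueness of the direct sum $\Der \hat{\f q} = \hat{\f L} \oplus \ad \hat{\f q}$ forces $\hat L = \sigma \hat L \sigma^{-1}$ and $\ad \hat z = \ad(\sigma \hat z)$ separately. Thus no explicit averaging is needed: uniqueness supplies the $\sigma$-equivariance for free.

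It then remains to descend each piece to $\f q$. The relation $\hat L \sigma = \sigma \hat L$ means $\hat L$ preserves the fixed-point set $\f q$, so $L := \hat L|_{\f q}$ is an $\R$-linear map whose complexification is $\hat L$; since $\hat L$ lands in $\widehat{\f q_Z}$ and kills $\widehat{[\f q, \f q]}$, taking real points shows $L(\f q) \subseteq \f q_Z$ and $L([\f q, \f q]) = 0$, so $L \in \f L$. For the inner part, $\ad \hat z = \ad(\sigma \hat z)$ says $\hat z - \sigma \hat z \in \hat{\f q}_Z = \widehat{\f q_Z}$; writing $\hat z = z_1 + \im z_2$ with $z_1, z_2 \in \f q$ gives $2 \im z_2 \in \widehat{\f q_Z}$, hence $z_2 \in \f q_Z$ is central and $\ad \hat z = \ad z_1$ with $z_1 \in \f q$. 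Restricting $\hat D = \hat L + \ad z_1$ back to $\f q$ yields $D = L + \ad z_1$, proving $\Der \f q = \f L + \ad \f q$.

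Finally, the three structural facts---that $\f L \subseteq \Der \f q$, that $\f L$ is an ideal, and that $\f L \cap \ad \f q = 0$---carry over verbatim from Claims \ref{claim:l_subset_derq}, \ref{claim:l_ideal_derq}, and \ref{claim:l_disjoint_adq} of the complex case, since those arguments use only that $\f q_Z$ is the center, that derivations stabilize $\f q_Z$ and $[\f q, \f q]$ (Proposition \ref{prop:derivations_stabilize_these}), and that $\f q_Z \cap [\f q, \f q] = \f g_Z \cap [\f q_S, \f q_S] = 0$, none of which depends on the scalar field. I expect the main obstacle to be the descent step, that is, guaranteeing that the complex decomposition can be chosen $\sigma$-equivariantly; the leverage is precisely the uniqueness of the complex direct sum, together with the observation that the inner summand may acquire a spurious imaginary central term which contributes nothing to $\ad$ and is simply discarded.
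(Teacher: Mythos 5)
Your proof is correct, and at the top level it takes the same route as the paper---complexify $D$ to $\hat D$, apply Proposition \ref{prop:complex_case} to $\hat{\f q}$, and descend---but your descent mechanism is genuinely different. The paper never introduces the conjugation $\sigma$: having written $\hat D = L + \ad(x + \im y)$ with $x, y \in \f q$, it evaluates at real points $u \in \f q$, splits $D(u) = (v_1 + [x,u]) + \im(v_2 + [y,u])$ into real and imaginary parts (this is where Ado's Theorem, realizing $\f g$ by real matrices, is invoked), and uses $\f g_Z \cap [\f q, \f q] = 0$ to force $v_2 = 0$ and $[y,u] = 0$; this simultaneously shows that $L|_{\f q}$ maps into $\f g_Z$ and that the imaginary part $y$ of the inner term is central, so $\ad(x + \im y) = \ad x$. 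You obtain the same two conclusions instead from $\sigma$-equivariance: after checking that $\hat{\f L}$ and $\ad \hat{\f q}$ are each stable under conjugation by $\sigma$, uniqueness in $\Der \hat{\f q} = \hat{\f L} \oplus \ad \hat{\f q}$ hands you $\sigma \hat L \sigma^{-1} = \hat L$ (so $\hat L$ preserves the fixed-point set $\f q$ and restricts to an element of $\f L$) and $\ad(\sigma \hat z) = \ad \hat z$ (so the imaginary part of $\hat z$ is central and can be discarded). Your version is more structural: it works intrinsically on $\f q \dsum \im \f q$, needs no matrix realization, and makes explicit that the reality of each summand is forced by uniqueness of the direct sum. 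The paper's version is more elementary, trading the verification of $\sigma$-stability of the two ideals for a short element-wise computation. Both proofs then finish identically, by noting that Claims \ref{claim:l_subset_derq}, \ref{claim:l_ideal_derq}, and \ref{claim:l_disjoint_adq} carry over verbatim to the real case.
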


\begin{proof}
  We may assume without loss of generality that $\f g$ is realized as a
  set of real matrices by Ado's Theorem (theorem \ref{thm:ados_thm}).
  We fix the following notation:
  \begin{itemize}
    \item[]
    $\im$ denotes the imaginary unit;
    \item[]
    $\hat{\f g} = \f g \dsum \im \f g =
    (\f g_Z \dsum \im\f g_Z) \oplus (\f g_S \dsum \im\f g_S)$
    denotes the complexification of $\f g$;
    \item[]
    $\f q = \f g_Z \oplus \f q_S$
    denotes a parabolic subalgebra of $\f g$;
    \item[]
    $\hat{\f q} = \f q \dsum \im\f q =
    (\f g_Z \dsum \im\f g_Z) \oplus (\f q_S \dsum \im\f q_S)$
    is a parabolic subalgebra of $\hat{\f g}$; and
    \item[]
    $\widehat{\f g_Z} = \f g_Z \dsum \im\f g_Z = \hat{\f g}_Z$
    denotes the center of $\hat{\f g}$.
  \end{itemize}

  Given a derivation $D$ of $\f q$, we have a corresponding derivation
  $\hat{D}$ of $\hat{\f q}$ given by
  $\hat{D} (x + \im y) = D(x) + \im D(y)$.
  As a derivation of $\hat{\f q}$, $\hat{D}$ decomposes as
  $\hat{D} = L + \ad (x + \im y)$ with $L$ mapping
  $\hat{\f q}$ into $\hat{\f g}_Z$ and mapping $[\hat{\f q},\hat{\f q}]$
  to $0$ and with $x,y \in \f q$. Note that $L$ sends $[\f q, \f q]$ to
  $0$, since $[\f q,\f q] \subseteq [\hat{\f q},\hat{\f q}]$.

  Let $u \in \f q$.
  $\hat{D}$ stabilizes $\f q$, so we have
  \[
    D(u) = \hat{D}(u) =
    L(u) + \ad (x+\im y) (u) = L(u) + [x,u] + \im [y,u]
    \in \f q
  \]
  Now, $L(u) \in \hat{\f g_Z} = \f g_Z + \im \f g_Z$,
  so we may write $L(u) = v_1 + iv_2$ with $v_1, v_2 \in \f g_Z$.
  Then
  \[
    D(u) =
    v_1 + iv_2 + [x,u] + \im [y,u] =
    (v_1 + [x,u]) + \im (v_2 + [y,u])
    \in \f q
  \]
  so $v_2 + [y,u] = 0$, and by the direct-sum decomposition
  $\f q = \f g_Z + [\f q, \f q]$, $v_2 = 0$ and $[y,u] = 0$.
  In particular, we have $L(u) = v_1$, so $L$ maps $\f q$ into $\f g_Z$.
  Furthermore, since $u$ was arbitrary and $[y,u] = 0$,
  we have $y \in \f g_Z$. Since $y \in \f g_Z$,
  we have for any arbitrary $z = u + \im v \in \f g$,
  \[
    \ad (x+\im y)(z)
    = \ad x (z) + \im \ad y (z)
    = \ad x(z)+\im [y,u]-[y,v]
    = \ad x(z),
  \]
  thus we have
  $\ad (x+\im y) = \ad x$.

  We now have $D = L|_{\f q} + \ad x$ with $x \in \f q$ and $L|_{\f q}$
  an $\R$-linear transformation mapping $\f q$ to $\f g_Z  (= \f q_Z)$
  and $[\f q, \f q]$ to $0$, as desired.
  We have left to check that arbitrary $\R$-linear maps sending $\f q$
  to $\f g_Z$ and $[\f q,\f q]$ to $0$ are derivations, that $\f L$ as
  described is an ideal of $\f q$, and that $\f L$ and $\ad \f q$
  intersect trivially, the proofs of which are identical to the proofs
  given of claims \ref{claim:l_subset_derq}, \ref{claim:l_ideal_derq},
  and \ref{claim:l_disjoint_adq} of theorem \ref{prop:complex_case},
  respectively.
\end{proof}

We now examine the relationship between the direct sum decomposition of
$\Der \f q$ and the restricted root space decomposition of $\f g$.
Given a parabolic subalgebra $\f q = \f g_Z \oplus \f q_S$ of a
reductive real Lie algebra $\f g = \f g_Z \oplus \f g_S$, we may choose
a restricted root space decomposition of $\f g$ that is compatible with
$\f q$ in the sense that $\f q$ is a standard parabolic subalgebra of
$\f g$.
We may then decompose $\f q$ into the sum of
$\f q = \f g_Z \dsum \f c \dsum [\f q,\f q]$ where $\f c$ is an
appropriately-chosen complimentary subspace, similar to the
algebraically closed case.
To achieve this decomposition, we rely on Langland's decomposition of
$\f q_S$, described in chapter \ref{sec:preliminaries}.
We fix the following notation pertaining to the restricted root space
decomposition of $\f g$:
\begin{itemize}
  \item[]
  $\f g = \f g_Z \dsum \f a \dsum \f m \dsum
  \sum_{\alpha \in \Phi} \f g_\alpha$;
  \item[]
  $\Delta$ a base of $\Phi$;
  \item[]
  $\Delta' \subseteq \Delta$ corresponding to $\f q_S$;
  \item[]
  $\Phi' = \Phi^+ \cup \left( \Phi \cap \Span \Delta' \right)$; and
  \item[]
  $\f q = \f g_Z \dsum \underbrace{\f a \dsum \f m \dsum
  \sum_{\gamma \in \Phi'} \f g_\gamma}_{\f q_S}$.
\end{itemize}
Write Langland's decomposition of $\f q_S$:
\begin{itemize}
  \item[]
  $\f l = \f a \dsum \f m \dsum
  \sum_{\gamma \in \Phi' \cap -\Phi'} \f g_\gamma$ and
  \item[]
  $\f n = \sum_{\gamma \in \Phi' \setminus -\Phi'} g_\gamma$
\end{itemize}
so that $\f q_S = \f l \ltimes \f n$ with $\f l$ reductive and $\f n$
nilpotent.
Write
\begin{itemize}
  \item[]
  $\f c$ for the center of $\f l$ and
  \item[]
  $\f l_S$ for the unique semisimple ideal of $\f l$
\end{itemize}
so that $\f l = \f c \oplus \f l_S$.

\begin{claim*}
  $[\f q, \f q] = \f l_S \dsum \f n$
\end{claim*}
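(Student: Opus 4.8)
The plan is to reduce everything to the semisimple factor $\f q_S$ and then exploit Langland's decomposition together with the reductive structure of the Levi factor $\f l = \f c \oplus \f l_S$. Since $\f g_Z$ is the center of $\f q$, it contributes nothing to brackets, so I would first record that $[\f q, \f q] = [\f q_S, \f q_S]$. From here the problem is entirely about $\f q_S = \f l \ltimes \f n$, and I would work with the vector-space decomposition $\f q_S = \f c \dsum \f l_S \dsum \f n$.

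Next I would expand $[\f q_S, \f q_S]$ by bilinearity across the three summands. The point is that $\f c$ is the center of $\f l$, so $[\f c, \f c] = [\f c, \f l_S] = 0$ and the only contribution to the Levi part comes from $[\f l, \f l] = [\f l_S, \f l_S] = \f l_S$, using that $\f l_S$ is semisimple and hence perfect; meanwhile $\f n$ is an ideal of $\f q_S$, so every bracket having a factor drawn from $\f n$ lands back in $\f n$. This immediately yields the inclusion $[\f q_S, \f q_S] \subseteq \f l_S \dsum \f n$, and it also shows $\f l_S = [\f l_S, \f l_S] \subseteq [\f q_S, \f q_S]$.

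It remains to prove the reverse inclusion $\f n \subseteq [\f q_S, \f q_S]$, which I expect to be the crux. Here I would use the restricted root space decomposition: $\f n = \sum_{\gamma \in \Phi' \setminus -\Phi'} \f g_\gamma$, and each such $\gamma$ is by definition a nonzero restricted root, hence a nonzero functional on $\f a$. Choosing $H \in \f a$ with $\gamma(H) \neq 0$, for any $x_\gamma \in \f g_\gamma$ we have $[H, x_\gamma] = \gamma(H) x_\gamma$, so $\f g_\gamma = [\f a, \f g_\gamma] \subseteq [\f l, \f n] \subseteq [\f q_S, \f q_S]$, since $\f a \subseteq \f l$. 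Summing over $\gamma \in \Phi' \setminus -\Phi'$ then gives $\f n \subseteq [\f q_S, \f q_S]$.

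Combining the two inclusions produces $[\f q_S, \f q_S] = \f l_S \dsum \f n$, and with the initial reduction the claim follows. The only genuine subtlety is the nonvanishing of each $\gamma$ on $\f a$, which is precisely the defining property of a restricted root; the remaining steps are bookkeeping with the semidirect-product and direct-sum structure, and in particular I would note that the sum $\f l_S \dsum \f n$ is direct because $\f l_S \subseteq \f l$ and Langland's decomposition already gives $\f l \cap \f n = 0$.
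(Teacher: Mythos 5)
Your proof is correct, and it is in fact more complete than the paper's own argument. For the inclusion $[\f q, \f q] \subseteq \f l_S \dsum \f n$ you do exactly what the paper does: discard the central summand $\f g_Z$, split elements of $\f q_S$ along Langland's decomposition, and use that $\f c$ is central in the reductive algebra $\f l$ while $\f n$ is an ideal of $\f q_S$. But the paper's proof stops there --- it only verifies that every bracket $[x,y]$ lands in $\f l_S \dsum \f n$, and never addresses the reverse containment. You supply that missing half: $\f l_S \subseteq [\f q_S, \f q_S]$ because a semisimple algebra is perfect, and $\f n \subseteq [\f q_S, \f q_S]$ because each $\gamma \in \Phi' \setminus -\Phi'$ is a nonzero functional on $\f a$, so $\f g_\gamma = [\f a, \f g_\gamma]$ after choosing $H \in \f a$ with $\gamma(H) \neq 0$. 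This extra work is not optional pedantry: the claim is immediately used to write $\f q = \f g_Z \dsum \f c \dsum [\f q, \f q]$, and that decomposition needs the genuine equality $[\f q, \f q] = \f l_S \dsum \f n$, not merely the containment the paper proves. Your observation that $\f l_S \dsum \f n$ is direct (since $\f l_S \subseteq \f l$ and $\f l \cap \f n = 0$) is also a detail the paper leaves implicit. In short: same strategy where the paper has one, plus a correct root-space argument where the paper has a gap.
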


\begin{proof}
  Let $x, y \in \f q$.
  We must show $[x,y] \in \f l_S \dsum \f n$.
  Without loss of generality, we may assume $x,y \in \f q_S$,
  since their projections onto $\f g_Z$ are lost upon applying bracket.

  Write $x = x_{\f l} + x_{\f n}$ and $y = y_{\f l} + y_{\f n}$ with
  $x_{\f l},y_{\f l} \in \f l$ and $x_{\f n},y_{\f n} \in \f n$.
  Then
  \begin{align*}
    [x,y] &= [ x_{\f l} + x_{\f n} , y_{\f l} + y_{\f n} ] \\
    &= \underbrace{[ x_{\f l} , y_{\f l} ]}_{\in \f l_S}
      + \underbrace{[ x_{\f l} , y_{\f n} ] + [ x_{\f n} , y_{\f l} ]
      + [ x_{\f n} , y_{\f n} ]}_{\in \f n}
    \in \f l_S \dsum \f n
  \end{align*}
  since $\f l$ is reductive and $\f n$ is an ideal of $\f q_S$.
\end{proof}

Thus we arrive at the desired decomposition,
\begin{align*}
  \f q &= \f g_Z \dsum \underbrace{\f l \dsum \f n}_{\f g_S} \\
  &= \f g_Z \dsum \f c \dsum \f l_S \dsum \f n \\
  &= \f g_Z \dsum \f c \dsum [\f q, \f q]
  \text{.}
\end{align*}

\begin{theorem}\label{prop:real_decomp}
  For any root system $\Phi$ with respect to which $\f q$ is a standard
  parabolic subalgebra, $\f q$ decomposes as
  $\f q=\f g_Z \dsum \f c \dsum [\f q,\f q]$
  (where $\f c$ is the center of the Levi factor $\f l$ of $\f q$)
  and the ideal $\f L$ of $\Der \f q$ consists of all $\R$-linear
  transformation on $\f q$ that map $\f g_Z \dsum \f c$ to $\f g_Z$ and
  map $[\f q,\f q]$ to $0$, whereby
  \[
    \Der \f q \cong
    \Hom_\R \left( \f g_Z  \dsum  \f c , \f g_Z \right)
    \oplus \ad \f q
    \text{.}
  \]
\end{theorem}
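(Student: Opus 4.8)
The plan is to leverage the structural decomposition $\f q = \f g_Z \dsum \f c \dsum [\f q, \f q]$ already assembled above (via Langland's decomposition $\f q_S = \f l \ltimes \f n$ together with the preceding claim that $[\f q, \f q] = \f l_S \dsum \f n$) and then to run the final bookkeeping exactly as in the algebraically closed case. By Theorem \ref{prop:real_case} we already possess the splitting $\Der \f q = \f L \oplus \ad \f q$ as a direct sum of ideals, where $\f L$ is the set of $\R$-linear maps sending $\f q$ into $\f q_Z = \f g_Z$ and $[\f q, \f q]$ to $0$; so nothing further is needed for the splitting itself. What remains is (i) to reconcile this description of $\f L$ with the one in the statement, and (ii) to identify $\f L$ with $\Hom_\R(\f g_Z \dsum \f c, \f g_Z)$ as a Lie algebra.

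First I would dispose of (i). Any $L \in \f L$ annihilates $[\f q, \f q]$, so by linearity $L(\f q) = L(\f g_Z \dsum \f c)$; hence $L(\f q) \subseteq \f g_Z$ if and only if $L(\f g_Z \dsum \f c) \subseteq \f g_Z$. Thus the two descriptions of $\f L$ — ``maps $\f q$ into $\f g_Z$ and $[\f q, \f q]$ to $0$'' and ``maps $\f g_Z \dsum \f c$ into $\f g_Z$ and $[\f q, \f q]$ to $0$'' — coincide, using that $\f q_Z = \f g_Z$ was established earlier. This is the first assertion of the theorem.

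For (ii), I would introduce the restriction map $\rho \colon \f L \to \Hom_\R(\f g_Z \dsum \f c, \f g_Z)$ given by $\rho(L) = L|_{\f g_Z \dsum \f c}$. Because every $L \in \f L$ kills $[\f q, \f q]$, it is determined by its restriction to $\f g_Z \dsum \f c$, and conversely every element of $\Hom_\R(\f g_Z \dsum \f c, \f g_Z)$ extends uniquely to a member of $\f L$ by declaring it $0$ on $[\f q, \f q]$; thus $\rho$ is an $\R$-linear bijection. To see that $\rho$ respects brackets, I would use the block-matrix form of the elements of $\f L$ relative to $\f q = \f g_Z \dsum \f c \dsum [\f q, \f q]$ (figure \ref{fig:l_blockform}): the commutator of two such matrices has its only nonzero entries in the $\f g_Z$- and $\f c$-columns and depends solely on the top-left block, which is exactly the bracket carried by $\Hom_\R(\f g_Z \dsum \f c, \f g_Z)$ under the realization of figure \ref{fig:l_smallform}. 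Hence $\rho$ is a Lie algebra isomorphism. Since $\Der \f q = \f L \oplus \ad \f q$ is a direct sum of ideals, we have $[\f L, \ad \f q] \subseteq \f L \cap \ad \f q = 0$, so the sum is a Lie algebra direct product and I would conclude $\Der \f q \cong \Hom_\R(\f g_Z \dsum \f c, \f g_Z) \oplus \ad \f q$.

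The argument is genuinely the verbatim analog of the algebraically closed case treated in Proposition \ref{prop:complex_case}; the only ingredient peculiar to the reals is the construction of the complement $\f c$ as the center of the Levi factor $\f l$, which is precisely where Langland's decomposition enters and which is already handled in the discussion preceding the theorem. Consequently the main — and effectively the only — obstacle is the one overcome there: producing a single subspace $\f c$ that simultaneously complements $[\f q, \f q]$ inside $\f q_S$ and is intrinsic enough to make the decomposition go through, since over $\R$ one cannot read it off a family of coroots $\{[x_\alpha, x_{-\alpha}]\}$ as in the complex case. Granting that decomposition, the remaining steps are the routine identifications above.
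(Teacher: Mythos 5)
Your proposal is correct and follows essentially the same route as the paper: both rest on the decomposition $\f q = \f g_Z \dsum \f c \dsum [\f q,\f q]$ assembled via Langland's decomposition in the discussion preceding the theorem, invoke Theorem \ref{prop:real_case} for the splitting $\Der \f q = \f L \oplus \ad \f q$, and then identify $\f L$ with $\Hom_\R\left(\f g_Z \dsum \f c, \f g_Z\right)$. The only difference is that you spell out what the paper dismisses as ``obvious''---the equivalence of the two descriptions of $\f L$ and the verification that the restriction map is a Lie algebra isomorphism---which is a harmless (indeed welcome) elaboration, not a different argument.
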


\begin{proof}
  The proof is essentially done.
  The majority is merely the description of the decomposition of $\f q$,
  already done above.
  We have left to show only that
  $\f L \cong \Hom_\R \left( \f g_Z + \f c , \f g_Z \right)$,
  which is obvious in light of the decomposition
  $\f q = \f g_Z  \dsum  \f c  \dsum  [\f q, \f q]$.
\end{proof}

\section{Further remarks and corollaries}\label{sec:corollaries}

The following three propositions represent extremal cases of the central
theorem.
Proposition \ref{cor:semisimple} applies to arbitrary parabolic
subalgebras of a semisimple Lie algebra (ie, the case $\f g_Z = 0$).
Proposition \ref{cor:borel} and \ref{cor:total} apply specifically to
minimal parabolic subalgebras (ie, Borel subalgebras) and maximal
parabolic subalgebras (ie, the entire Lie algebra $\f g$), respectively.

\begin{proposition}\label{cor:semisimple}
  For a parabolic subalgebra $\f q$ of a semisimple Lie algebra $\f g$
  over the field $\K$ or over $\R$, the derivation algebra $\Der \f q$
  satisfies
  \[
    \Der \f q = \ad \f q
    \text{.}
  \]
\end{proposition}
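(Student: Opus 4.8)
The plan is to derive the result as an immediate consequence of the central theorem, established as proposition \ref{prop:complex_case} in the algebraically closed case and as theorem \ref{prop:real_case} in the real case. In either case the theorem furnishes the decomposition $\Der \f q = \f L \oplus \ad \f q$, where $\f L$ is the ideal of all linear transformations mapping $\f q$ into its center $\f q_Z$ and annihilating $[\f q, \f q]$. Thus to conclude $\Der \f q = \ad \f q$ it suffices to show that $\f L = 0$ whenever $\f g$ is semisimple.

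The key observation is that the target of every map in $\f L$ is the center $\f q_Z$. We have already shown that the center of $\f q$ coincides with $\f g_Z$. When $\f g$ is semisimple its center vanishes, so $\f q_Z = \f g_Z = 0$, and any linear transformation whose image lies in the zero subspace is itself the zero map. Hence $\f L = 0$, and the decomposition of the central theorem collapses to $\Der \f q = 0 \oplus \ad \f q = \ad \f q$.

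There is essentially no obstacle here: all of the substantive work has been carried out in the proofs of the central theorem, and this proposition is purely the specialization $\f g_Z = 0$. The only point that must be kept in view is the identification $\f q_Z = \f g_Z$, which rests on the earlier computation of the center of a parabolic subalgebra; it is precisely this identification that forces $\f L$ to collapse in the semisimple setting.
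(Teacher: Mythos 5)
Your proof is correct and follows exactly the paper's argument: invoke the central theorem to get $\Der \f q = \f L \oplus \ad \f q$, then observe that semisimplicity forces $\f q_Z = \f g_Z = 0$, which collapses $\f L$ to zero. The paper's own proof is a one-line version of this same specialization, so there is nothing to add.
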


\begin{proof}
  By the central theorem, $\Der \f q = \f L \oplus \ad \f q$, and since
  $\f g_Z=0$, we have $\f L=0$.
\end{proof}

Proposition \ref{cor:semisimple} was proven for Borel subalgebras of
semisimple Lie algebras over an arbitrary field by Leger and Luks in
\cite{leger1972cohomology}.
Tolpygo found the same result for parabolic subalgebras of complex Lie
algebras \cite{tolpygo1972cohomologies}.
Our results show that the same is true for real Lie algebras.

\begin{proposition}\label{cor:borel}
  For a Borel subalgebra
  $\f b = \f g_Z \dsum \f g_0 \dsum
  \sum_{\alpha \in \Phi^+} \f g_\alpha$
  of the reductive Lie algebra $\f g = \f g_Z \oplus \f g_S$ over $\K$
  or over $\R$, the derivation algebra $\Der \f b$ satisfies
  \[
    \Der \f b \cong
    \Hom_{\F} \left( \f g_Z \dsum \left(\f g_0\right)_Z,
    \f g_Z \right) \oplus \ad \f b
    \text{.}
  \]
\end{proposition}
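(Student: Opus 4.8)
The plan is to apply the central theorem---Proposition \ref{prop:complex_case} in the case $\F = \K$ and Theorem \ref{prop:real_decomp} in the case $\F = \R$---to the special situation in which $\f q = \f b$ is a Borel subalgebra, which is to say a \emph{minimal} parabolic subalgebra. The only genuine work is to identify the complementary subspace $\f c$ appearing in those results with the center $(\f g_0)_Z$ of the zero root space, after which the isomorphism in the statement is immediate.

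First I would observe that a Borel subalgebra is precisely the standard parabolic subalgebra corresponding to the empty subset $\Delta' = \emptyset$ of the base $\Delta$. With $\Delta' = \emptyset$ we have $\Span \Delta' = 0$, hence $\Phi \cap \Span \Delta' = \emptyset$ since roots are nonzero, and therefore $\Phi' = \Phi^+$. Consequently $\Phi' \cap -\Phi' = \Phi^+ \cap \Phi^- = \emptyset$, so the Levi factor $\f l$ of $\f b$ reduces to the zero root space alone: $\f l = \f g_0$, equal to $\f h$ in the algebraically closed case and to $\f a \dsum \f m$ in the real case, while the nilradical is $\f n = \sum_{\alpha \in \Phi^+} \f g_\alpha$.

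Next I would match $\f c$ with $(\f g_0)_Z$. In the real case $\f c$ is by definition the center of the Levi factor $\f l = \f g_0$, so $\f c = (\f g_0)_Z$ with nothing further to check. In the algebraically closed case $\f g_0 = \f h$ is abelian, whence $(\f g_0)_Z = \f h$; on the other hand the definition $\f c = \Span \set{[x_\alpha, x_{-\alpha}]}{\alpha \in \Delta \setminus \Delta'}$ specializes at $\Delta' = \emptyset$ to $\f c = \Span \set{[x_\alpha, x_{-\alpha}]}{\alpha \in \Delta} = \f h$, using that the coroots span $\f h$ as recorded in the first claim of Section \ref{sec:complex_case}. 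Either way $\f c = (\f g_0)_Z$.

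Finally I would substitute $\f c = (\f g_0)_Z$ into the isomorphism supplied by the central theorem, recalling in the algebraically closed case that $\f q_S \cong \ad \f q$ (the adjoint map has kernel $\f g_Z$), so that the summand $\f q_S$ there may be written uniformly as $\ad \f b$. This yields $\Der \f b \cong \Hom_\F\left(\f g_Z \dsum (\f g_0)_Z, \f g_Z\right) \oplus \ad \f b$, as claimed. I expect no real obstacle: the proposition is a direct specialization of the main results, and the only point demanding care is the uniform bookkeeping that sends $\f c$ to the center of $\f g_0$ across both choices of field.
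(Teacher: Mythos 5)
Your proposal is correct and follows essentially the same route as the paper: the paper's proof likewise observes that $\sum_{\alpha \in \Phi^+} \f g_\alpha$ is the nilradical of $\f b_S$, hence the Levi factor is $\f g_0$, and then applies the central theorem. Your write-up merely fills in the details the paper leaves implicit, namely that $\Delta' = \emptyset$ forces $\Phi' = \Phi^+$ and that $\f c = (\f g_0)_Z$ in both the algebraically closed case (where $\f c = \f h = \f g_0$) and the real case (where $\f c$ is by definition the center of $\f l = \f a \dsum \f m$).
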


\begin{proof}
  Write $\f b_S = \f g_0 \dsum \sum_{\alpha \in \Phi^+} \f g_\alpha$.
  Since $\sum_{\alpha \in \Phi^+} \f g_\alpha$ is clearly the nilpotent
  radical of $\f b_S$, the Levi factor $\f l = \f g_0$. Applying the
  central theorem gives the result.
\end{proof}

Farnsteiner proved proposition \ref{cor:borel} for semisimple algebras
over an algebraically closed field in \cite{farnsteiner1988derivations}.
As with proposition \ref{cor:semisimple}, our results establish this
fact for real Lie algebras, as well as establishing the result for
reductive algebras.

\begin{proposition}\label{cor:total}
  For a reductive Lie algebra $\f g = \f g_Z \oplus \f g_S$ over $\K$ or
  over $\R$, the derivation algebra $\Der \f g$ satisfies
  \[
    \Der \f g \cong \f{gl}(\f g_Z) \oplus \ad \f g
    \text{.}
  \]
\end{proposition}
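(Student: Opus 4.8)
The plan is to recognize that $\f g$ is itself a parabolic subalgebra of $\f g$---the maximal (improper) parabolic subalgebra, arising from the choice $\Delta' = \Delta$, whence $\Phi' = \Phi$---and then to invoke the central theorem directly. Applying proposition \ref{prop:complex_case} in the algebraically closed case, or theorem \ref{prop:real_case} together with theorem \ref{prop:real_decomp} in the real case, to $\f q = \f g$ immediately yields $\Der \f g = \f L \oplus \ad \f g$ with
\[
  \Der \f g \cong \Hom_\F \left( \f g_Z \dsum \f c, \f g_Z \right) \oplus \ad \f g,
\]
so the entire argument reduces to identifying the complementary subspace $\f c$.

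The key computation is that $\f c = 0$. First I would note that with $\Phi' = \Phi$ we have $\Phi' \cap -\Phi' = \Phi$ and $\Phi' \setminus -\Phi' = \emptyset$, so the nilradical $\f n = 0$ and the Levi factor $\f l$ is all of $\f g_S$. Since $\f c$ is (or may be chosen to be) the center of the Levi factor $\f l = \f g_S$, and $\f g_S$ is semisimple with trivial center, we obtain $\f c = 0$; equivalently, in the algebraically-closed notation $\f c = \Span \set{[x_\alpha, x_{-\alpha}]}{\alpha \in \Delta \setminus \Delta'}$ is a span over the empty index set. Substituting $\f c = 0$ collapses the domain of the Hom-space to $\f g_Z$ alone, giving
\[
  \Der \f g \cong \Hom_\F \left( \f g_Z, \f g_Z \right) \oplus \ad \f g.
\]

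Finally I would observe that $\Hom_\F(\f g_Z, \f g_Z) = \f{gl}(\f g_Z)$ as a Lie algebra: in the recipe $\Hom_\F(V_1 \dsum V_2, V_1) = \f{gl}(V_1) \ltimes \Hom_\F(V_2, V_1)$ developed after the statement of proposition \ref{prop:complex_case}, the second factor vanishes when $V_2 = 0$, leaving precisely $\f{gl}(V_1)$ with its standard commutator bracket. This yields $\Der \f g \cong \f{gl}(\f g_Z) \oplus \ad \f g$, as claimed. There is no genuine obstacle here: the proposition is a degenerate specialization of the central theorem, and the only things to verify are that $\f g$ legitimately occurs as a standard parabolic subalgebra (it does, via $\Delta' = \Delta$) and that the resulting $\f c$ is trivial. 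The one point worth stating carefully is the identification of the semidirect Hom-space with $\f{gl}(\f g_Z)$ under the degeneration $\f c = 0$, so that the displayed isomorphism is one of Lie algebras and not merely of vector spaces.
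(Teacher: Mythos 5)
Your proposal is correct and follows essentially the same route as the paper: both specialize the central theorem to $\f q = \f g$, observe that the Levi factor is $\f g_S$ itself so that its center $\f c$ vanishes (by semisimplicity), and then identify $\f L \cong \Hom(\f g_Z, \f g_Z) = \f{gl}(\f g_Z)$. Your write-up is somewhat more explicit than the paper's (naming $\Delta' = \Delta$, $\Phi' = \Phi$, and checking that the degenerate semidirect-product bracket on the Hom-space is the standard one on $\f{gl}(\f g_Z)$), but the underlying argument is identical.
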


\begin{proof}
  $\f g_S$ is its own Levi factor. Being semisimple, the center of
  $\f g_S$ is trivial, so $\f L$ consists of linear maps stabilizing
  $\f g_Z$ and sending $\f g_S = [\f g, \f g]$ to $0$, which are exactly
  the linear maps on $\f g_Z$ direct sum with the zero map on $\f g_S$.
\end{proof}

The next corollary provides a high-level abstract description of
$\Der \f q$ useful for dim\-en\-sion-count\-ing arguments.
It is also satisfying on a theoretical level, since it relies on simple
constructions that can be carried out on any Lie algebra, suggesting
that the result here for parabolic subalgebras of reductive Lie algebras
might be generalized to a larger classes of Lie algebras.

\begin{proposition}\label{cor:abstract_decomposition}
  For a parabolic subalgebra $\f q$ of a reductive Lie algebra $\f g$
  over $\K$ or over $\R$, we have
  \[
    \Der \f q \cong
    \Hom \left( \f q / [\f q, \f q], \f q_Z \right)
    \oplus \left( \f q / \f q_Z \right).
  \]
\end{proposition}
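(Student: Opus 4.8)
The plan is to deduce this intrinsic description directly from the central theorem. Propositions \ref{prop:complex_case} and \ref{prop:real_case} already give the decomposition $\Der \f q = \f L \oplus \ad \f q$ as a direct sum of ideals, so it suffices to identify each summand intrinsically: I would show $\ad \f q \cong \f q / \f q_Z$ and $\f L \cong \Hom(\f q/[\f q,\f q], \f q_Z)$ as Lie algebras, and then combine them, the direct sum of ideals on the left corresponding to the direct sum of Lie algebras on the right.

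For the adjoint summand, the first isomorphism theorem for Lie algebras does all the work. The map $\ad \colon \f q \to \Der \f q$ is a Lie algebra homomorphism with image $\ad \f q$ and kernel exactly the center $\f q_Z$, whence $\ad \f q \cong \f q / \f q_Z$.

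For the summand $\f L$, recall that $\f L$ consists of all linear maps sending $\f q$ into $\f q_Z$ and killing $[\f q,\f q]$. Since every such map annihilates $[\f q,\f q]$, it factors uniquely through the quotient $\f q / [\f q,\f q]$ by the universal property, and its image lies in $\f q_Z$; conversely any element of $\Hom(\f q/[\f q,\f q], \f q_Z)$ pulls back to such a map. This gives a linear bijection $L \mapsto \bar L$, where $\bar L(\bar x) = L(x)$. The work is to check that this bijection respects Lie brackets. Here $\Hom(\f q/[\f q,\f q], \f q_Z)$ carries the Lie structure introduced after proposition \ref{prop:complex_case}: since $\f q = \f g_Z \dsum \f c \dsum [\f q, \f q]$ gives $\f q/[\f q,\f q] \cong \f g_Z \dsum \f c$ with $\f q_Z = \f g_Z$ a direct summand, we realize $\f q_Z$ as a summand of $\f q/[\f q,\f q]$ and bracket by commutator of the induced endomorphisms. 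The essential observation is that for $L_2 \in \f L$ the element $L_2(x)$ already lies in $\f q_Z$, and $\f q_Z$ injects into $\f q/[\f q,\f q]$ because $\f q_Z \cap [\f q, \f q] = 0$; a short computation then gives $\overline{L_1 L_2 x} = \bar L_1 \bar L_2 (\bar x)$, so that $\overline{[L_1,L_2]} = [\bar L_1, \bar L_2]$ and $L \mapsto \bar L$ is a Lie algebra isomorphism.

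The main obstacle, then, is nothing more than bookkeeping: confirming that the commutator bracket inherited by $\f L$ as an ideal of $\Der \f q$ matches the composition bracket placed on $\Hom(\f q/[\f q,\f q], \f q_Z)$ under the identification $\f q_Z \hookrightarrow \f q/[\f q,\f q]$. Once the two summand isomorphisms are in hand, combining them yields $\Der \f q \cong \Hom(\f q/[\f q,\f q], \f q_Z) \oplus (\f q/\f q_Z)$, and I would note that the resulting description is manifestly choice-free, since $\f q/[\f q,\f q]$, $\f q_Z$, and $\f q/\f q_Z$ are all intrinsic to $\f q$.
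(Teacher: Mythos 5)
Your proposal is correct and takes essentially the same approach as the paper: both deduce the result from the central theorem by identifying $\ad \f q \cong \f q/\f q_Z$ via the first isomorphism theorem (kernel of $\ad$ equals the center) and by identifying the ideal $\f L$ with $\Hom\left(\f q/[\f q,\f q], \f q_Z\right)$ through the abelianization of $\f q$. The only difference is one of bookkeeping: the paper simply substitutes $\f g_Z \dsum \f c \cong \f q/[\f q,\f q]$ and $\f g_Z = \f q_Z$ into the explicit isomorphism $\Der \f q \cong \Hom\left(\f g_Z \dsum \f c, \f g_Z\right) \oplus \ad \f q$ already established in propositions \ref{prop:complex_case} and \ref{prop:real_decomp}, whereas you re-derive that identification by factoring elements of $\f L$ through the quotient and checking the bracket directly.
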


\begin{proof}
  Recall that $\f q / [\f q, \f q]$ is the minimal abelian quotient of
  $\f q$.
  Since $\f g_Z \dsum \f c$ is abelian and since
  $\f q = \f g_Z \dsum \f c \dsum [\f q, \f q]$,
  we have $\f g_Z \dsum \f c \cong \f q / [\f q, \f q]$.
  Also, $\f g_Z = \f q_Z$, and $\ad \f q \cong \f q / \f q_Z$,
  thus the corollary.
\end{proof}

Our work largely follows the results of Leger and Luks and Tolpygo.
Leger's and Luks's results imply that all derivations of a Borel
subalgebra of a simple Lie algebra are inner (over any field with
characteristic not 2) \cite{leger1972cohomology}, and similarly
Tolpygo's results (applicable specifically over the complex field)
imply that all derivations of a parabolic subalgebra of a semisimple
Lie algebra are inner \cite{tolpygo1972cohomologies}.
The full results in these papers are somewhat more general and stated
in the language of cohomology:
The authors prove that all cohomology group $H^n(\f g, \f g)$ are
trivial for their respective classes of Lie algebras $\f g$ under
consideration \cite{leger1972cohomology, tolpygo1972cohomologies}.

For instance, the first cohomology group $H^1(\f g; \f g)$ of a Lie
algebra $\f g$ satisfies the isomorphism
\[
  H^1(\f g; \f g) \cong \Der \f g / \ad \f g.
\]
From this isomorphism, it follows that $H^1(\f g; \f g) = 0$ implies
that all derivations of $\f g$ are inner.
A further application of cohomology is to extensions of a Lie algebra
$\f b$ by an ideal $\f a$. We have the isomorphism
\[
  H^2(\f b; \f a) \cong \mathop{\mathrm{Ext}}(\f b; \f a)
\]
so the second cohomology group $H^2(\f b; \f a)$ parametrizes the
extensions of $\f b$ by $\f a$.

In light of these two isomorphisms, the language and methods of
cohomology provide a strong framework for discovering structural
properties of $\Der \f g$ as they relate to properties of $\f g$.
Our results on derivations apply to reductive Lie algebras,
direct extensions of semisimple Lie algebras by an abelian Lie algebra.
A consideration of $H^2$ might be employed to study the derivations of
general extensions of Lie algebras.

A second vehicle for future research that we will discuss deals with
the abstract form of the decomposition of $\Der \f q$ given in
proposition \ref{cor:abstract_decomposition}.
If we denote by $\f g$ a parabolic subalgebra, we have that the
derivation algebra $\Der \f g$ decomposes as
\begin{equation}\label{eq:abstract_decomposition}
\Der \f g \cong
\Hom \left( \f g / [\f g, \f g], \f g_Z \right) \oplus \ad \f g
\text{.}
\end{equation}
The constructions necessary to express isomorphism
\ref{eq:abstract_decomposition} are completely general,
motivating the following question:
for which Lie algebras $\f g$ does isomorphism
\ref{eq:abstract_decomposition} hold?

We remind the reader that a Lie algebra $\f g$ is called \emph{complete}
if $\f g_Z = 0$ and $\f g$ has only inner derivations.
Analogously, we propose the following definition: a Lie algebra $\f g$
is \emph{almost complete} if \eqref{eq:abstract_decomposition}
holds.
As an area for future investigation we may wish to
characterize the class of almost complete Lie algebras.

\bibliographystyle{plain}
\bibliography{article}

\begin{thebibliography}{10}

\bibitem{bourbaki1975lie}
Nicolas Bourbaki.
\newblock {\em Lie groups and {L}ie algebras. {C}hapters 1--3}.
\newblock Elements of Mathematics (Berlin). Springer-Verlag, Berlin, 1989.
\newblock Translated from the French, Reprint of the 1975 edition.

\bibitem{chen2011nonlinear}
Zhengxin Chen and Dengyin Wang.
\newblock Nonlinear maps satisfying derivability on standard parabolic
  subalgebras of finite-dimensional simple {L}ie algebras.
\newblock {\em Linear Multilinear Algebra}, 59(3):261--270, 2011.

\bibitem{chen2012nonlinear}
Zhengxin Chen and Zhankui Xiao.
\newblock Nonlinear {L}ie triple derivations on parabolic subalgebras of
  finite-dimensional simple {L}ie algebras.
\newblock {\em Linear Multilinear Algebra}, 60(6):645--656, 2012.

\bibitem{farnsteiner1988derivations}
Rolf Farnsteiner.
\newblock Derivations and central extensions of finitely generated graded {L}ie
  algebras.
\newblock {\em J. Algebra}, 118(1):33--45, 1988.

\bibitem{humphreys1972introduction}
James~E. Humphreys.
\newblock {\em Introduction to {L}ie algebras and representation theory}.
\newblock Springer-Verlag, New York-Berlin, 1972.
\newblock Graduate Texts in Mathematics, Vol. 9.

\bibitem{knapp2002lie}
Anthony~W. Knapp.
\newblock {\em {L}ie groups beyond an introduction}, volume 140 of {\em
  Progress in Mathematics}.
\newblock Birkh\"auser Boston Inc., Boston, second edition, 2002.

\bibitem{leger1972cohomology}
G.~Leger and E.~Luks.
\newblock Cohomology theorems for {B}orel-like solvable {L}ie algebras in
  arbitrary characteristic.
\newblock {\em Canad. J. Math.}, 24:1019--1026, 1972.

\bibitem{maclane1967algebra}
Saunders Mac~Lane and Garrett Birkhoff.
\newblock {\em Algebra}.
\newblock The Macmillan Co., New York; Collier-Macmillan Ltd., London, 1967.

\bibitem{ou2007derivations}
Shikun Ou, Dengyin Wang, and Ruiping Yao.
\newblock Derivations of the {L}ie algebra of strictly upper triangular
  matrices over a commutative ring.
\newblock {\em Linear Algebra Appl.}, 424(2-3):378--383, 2007.

\bibitem{strade1988modular}
Helmut Strade and Rolf Farnsteiner.
\newblock {\em Modular {L}ie algebras and their representations}, volume 116 of
  {\em Monographs and Textbooks in Pure and Applied Mathematics}.
\newblock Marcel Dekker, Inc., New York, 1988.

\bibitem{tolpygo1972cohomologies}
Aleksei~Kirillovich Tolpygo.
\newblock Cohomologies of parabolic {L}ie algebras.
\newblock {\em Mathematical Notes}, 12(3):585--587, 1972.

\bibitem{wang2008derivations}
Dengyin Wang and Xian Wang.
\newblock Derivations of the subalgebras intermediate the general linear {L}ie
  algebra and the diagonal subalgebra over commutative rings.
\newblock {\em Arch. Math. (Brno)}, 44(3):173--183, 2008.

\bibitem{wang2006derivations}
Dengyin Wang and Qiu Yu.
\newblock Derivations of the parabolic subalgebras of the general linear {L}ie
  algebra over a commutative ring.
\newblock {\em Linear Algebra Appl.}, 418(2-3):763--774, 2006.

\bibitem{wang2010product}
Dengyin Wang, Wei Zhang, and Zhengxin Chen.
\newblock Product zero derivations of the parabolic subalgebras of simple {L}ie
  algebras.
\newblock {\em J. Lie Theory}, 20(1):167--174, 2010.

\bibitem{zhang2008class}
Hai~Shan Zhang.
\newblock A class of non-degenerate solvable {L}ie algebras and their
  derivations.
\newblock {\em Acta Math. Sin. (Engl. Ser.)}, 24(1):7--16, 2008.

\end{thebibliography}

\end{document}